\documentclass[11pt]{article}

\usepackage[T1]{fontenc}
\usepackage{lmodern}

\usepackage{amsmath, amssymb}
\usepackage[truedimen,nohead,includefoot,
hmargin=2cm,top=2cm,bottom=1cm,footskip=1cm]{geometry}
\usepackage{bm}
\usepackage{mathtools}
\usepackage{framed}
\usepackage{color}
\usepackage{graphicx}
\usepackage[dvipsnames]{xcolor}
\usepackage{tabls}
\usepackage{multirow}
\usepackage{subcaption}
\usepackage{tikz}
\usetikzlibrary{calc}
\usepackage{enumitem}
\usepackage{amsthm}
\usepackage{caption}

\captionsetup{font=small}

\theoremstyle{plain}
\newtheorem{thm}{Theorem}[section]

\newtheorem{lem}[thm]{Lemma}

\theoremstyle{remark}
\newtheorem{rem}[thm]{Remark}

\theoremstyle{definition}
\newtheorem{defi}[thm]{Definition}

\mathtoolsset{showonlyrefs=true}
\makeatletter

\@addtoreset{equation}{section}
\makeatother
\allowdisplaybreaks

\DeclareMathOperator*{\minimize}{minimize}
\newcommand{\bR}{\mathbb{R}}
\newcommand{\bN}{\mathbb{N}}

\title{
Numerical analysis of elastica with obstacle and adhesion effects
}
\author{Tomoya \textsc{Kemmochi}%
\thanks{Graduate School of Mathematical Sciences, The University of Tokyo,
Komaba 3-8-1, Meguro-ku, Tokyo, 153-8914, Japan.
\textit{E-mail}: \texttt{kemmochi@ms.u-tokyo.ac.jp}}
}

\begin{document}
\maketitle

\begin{abstract}
We consider the numerical computation of a variational problem that arises from materials science.
The target functional is a type of elastic energy that is influenced by obstacles and adhesion.
Owing to its strong nonlinearity and discontinuity, the Euler-Lagrange equation is very complicated, and numerical computation of its critical points is difficult.
In this paper, we discretize and regularize the target energy as a functional defined on a space of polygonal curves.
Moreover, we develop convergence analysis for discrete minimizers in the framework of $\Gamma$-convergence.
We first show that the discrete energy functional $\Gamma$-converges to the original one.
Then, we establish the compactness property for the sequence of discrete minimizers.
These two results allow us to extract a convergent subsequence from the discrete minimizers.
We also present some numerical examples in the last section of the paper.
Existence of singular local minimizers is suggested by numerical experiments.

\bigskip

\noindent\textbf{Keywords:} elastic energy; elastica; finite difference method, $\Gamma$-convergence, obstacle problem, adhesive problem

\medskip

\noindent\textbf{Mathematics Subject Classification (MSC2010):}
74G15; 
74G65; 

\end{abstract}

\section{Introduction}
\label{section:intro}
In this paper, we consider the following variational problem arising from materials science:
\begin{equation}
\minimize_{u \ge \psi} E[u] :=
\frac{C}{2} \int \kappa^2 ds + \sigma \int ds
- \gamma \int_{\{u = \psi \}} ds,
\label{eq:problem0}
\end{equation}
where
$u \colon (0,1) \to \bR$ is an unknown function, $\psi \colon (0,1) \to \bR$ is a given smooth function,
$C$, $\sigma$, and $\gamma$ are given positive constants, and $\kappa$ and $ds$ are, respectively, the curvature and the line element of the graph of $u$.
We impose the periodic boundary condition at $x=0$ and $x=1$ (that is, we assume that $u$ and $\psi$ are periodic functions on $(0,1)$).
In the problem \eqref{eq:problem0}, the graph of $u$ expresses the shape of a membrane or a filament, and that of $\psi$ expresses a rippled surface.
The first term is called bending energy, which straightens the filament.
The second is called tension, which shortens the filament.
The third is called adhesion energy, which forces the filament to adhere to the obstacle.
The functional $E$ is the surface energy of the membrane, and its minimizer describes the steady state of the membrane constrained above the obstacle.
We consider only the one-dimensional case, that is, when $u$ is a filament or a membrane depending on only one direction.
For more details on the physical background of the problem \eqref{eq:problem0}, refer to \cite{Pie08}, who proposed this problem.

The energy functional
\begin{equation}
\frac{C}{2} \int \kappa^2 ds + \sigma \int ds
\label{eq:elastic-energy}
\end{equation}
is called elastic energy.
Its critical point is known as elastica, first introduced by Euler \cite{Eul1744} and then studied analytically (see \cite{Lov44,Sin08}) and numerically (see \cite{BruHN01,BruNR01,IglB15}) by many researchers.
In addition to the elastic energy, we consider the effects of obstacles and adhesion in the problem \eqref{eq:problem0}.
Although there are several studies on the obstacle problem with the effect of adhesion (see for example \cite{AltC81,Oma93,Yam03}),
they considered the Dirichlet energy, rather than our elastic energy.

In \cite{Miu16}, the problem \eqref{eq:problem0} is studied analytically.
To our knowledge, there is no mathematical result on the problem \eqref{eq:problem0} other than \cite{Miu16}.
The singular perturbation problem as $C \downarrow 0$ is considered in \cite{Miu16}.
However, even the solvability of the problem \eqref{eq:problem0} has as yet not been established.
In view of the materials scientific background mentioned above, it is important to compute minimizers of the functional $E$ numerically.
Therefore, for the convergence analysis of the computed minimizers, it is desirable to guarantee the existence of solutions of the problem \eqref{eq:problem0}.

Our study has three aims:
\begin{enumerate}[label=(\Alph*), leftmargin=*]
\item derivation of the solvability of the problem \eqref{eq:problem0},
\item numerical computation of the minimizers of $E$, and
\item convergence analysis of discrete minimizers.
\end{enumerate}
Setting aside the question of solvability, one might think that it is not difficult to compute an approximate solution of \eqref{eq:problem0}.
However, it is very difficult to derive the Euler-Lagrange equation of the functional $E$, owing to its strong nonlinearity and discontinuity (resulting from the adhesion term).
Moreover, since the functional $E$ is not convex, the solution might not be unique even if it exists.
Therefore, we will forget the Euler-Lagrange equation and rely more on direct numerical computation and convergence analysis instead.

Our strategy is as follows.
We first discretize and regularize the functional $E$ as a continuous functional $E_{h,\delta,\rho}$ (see \eqref{eq:disc-energy}) on the space of periodic polygonal curves defined on the interval $(0,1)$ (for precise notation, see Subsection~\ref{subsec:discretization}).
Moreover, we add a penalty term to handle the effects of obstacles.
Then, the discretized functional is a continuous function defined on $\mathbb{R}^N$, and thus, can be handled by many existing numerical optimization methods, for example, the quasi-Newton method.
Hence, we can accomplish task (B).
For convergence analysis, we use the method of $\Gamma$-convergence,
which is convergence of functionals.
The notion of $\Gamma$-convergence was first introduced by De Giorgi \cite{DeGF75} in the 1970s, and numerous results on variational problems have been established in this framework since then.
The definition of $\Gamma$-convergence is as follows.
\begin{defi}[$\Gamma$-convergence]\label{def:gamma-conv}
Let $X$ be a metric space, and let $F, F_\varepsilon \colon X \to \bR \cup \{+\infty\}$ be functionals defined on $X$.
We say that $F_\varepsilon$ \textit{$\Gamma$-converges} to $F$ as $\varepsilon \to 0$ if the following two properties hold:
\begin{description}
\item[(U)]
$\forall x \in X$, $\exists x_\varepsilon \to x$  in $X$ s.t.
$\limsup_{\varepsilon \to 0} F_\varepsilon[x_\varepsilon] \le F[x]$.
\item[(L)]
$\forall x \in X$, $\forall x_\varepsilon \to x$  in $X$,
$\liminf_{\varepsilon \to 0} F_\varepsilon[x_\varepsilon] \ge F[x]$.
\end{description}
\end{defi}
In this paper, the following fundamental theorem of $\Gamma$-convergence plays a crucial role.
\begin{lem}\label{lem:fundamental}
Let $X$ be a metric space and let $F_\varepsilon$ be a functional on $X$ that $\Gamma$-converges to a functional $F$ as $\varepsilon \to 0$.
Assume that each $F_\varepsilon$ admits at least one minimizer $\bar{x}_\varepsilon$, and that the sequence $\{ x_\varepsilon \}_\varepsilon$ has a cluster point $\bar{x}$.
Then, $F$ attains a minimum at $\bar{x}$.
Moreover,
\begin{equation}
F[\bar{x}] = \inf_{x \in X}F[x] = \lim_{\varepsilon \to 0} F_\varepsilon[\bar{x}_\varepsilon].
\end{equation}
\end{lem}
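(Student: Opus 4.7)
The plan is to run the classical ``$\Gamma$-limit of minimizers'' argument. First I would extract from the cluster-point hypothesis a subsequence $\bar{x}_{\varepsilon_k} \to \bar{x}$ in $X$. Then the two halves of the definition of $\Gamma$-convergence are used in complementary roles: property (U) supplies an upper bound on $\limsup_\varepsilon F_\varepsilon[\bar{x}_\varepsilon]$ coming from arbitrary competitors $y \in X$, while property (L) supplies a lower bound on $\liminf_k F_{\varepsilon_k}[\bar{x}_{\varepsilon_k}]$ in terms of $F[\bar{x}]$.

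More concretely, for an arbitrary $y \in X$, property (U) produces a recovery sequence $y_\varepsilon \to y$ with $\limsup_{\varepsilon \to 0} F_\varepsilon[y_\varepsilon] \le F[y]$. Combined with the minimality $F_\varepsilon[\bar{x}_\varepsilon] \le F_\varepsilon[y_\varepsilon]$ this yields $\limsup_{\varepsilon \to 0} F_\varepsilon[\bar{x}_\varepsilon] \le F[y]$. On the other hand, property (L) applied to $\bar{x}_{\varepsilon_k} \to \bar{x}$ gives $F[\bar{x}] \le \liminf_{k \to \infty} F_{\varepsilon_k}[\bar{x}_{\varepsilon_k}]$. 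Stringing these together along the cluster subsequence yields
\begin{equation}
F[\bar{x}] \le \liminf_{k \to \infty} F_{\varepsilon_k}[\bar{x}_{\varepsilon_k}] \le \limsup_{k \to \infty} F_{\varepsilon_k}[\bar{x}_{\varepsilon_k}] \le F[y].
\end{equation}
Taking the infimum over $y \in X$ gives $F[\bar{x}] \le \inf_X F$, hence $F[\bar{x}] = \inf_X F$ and $\bar{x}$ is a minimizer of $F$. Finally, choosing $y = \bar{x}$ in the chain above forces equality everywhere, which produces the convergence $F_{\varepsilon_k}[\bar{x}_{\varepsilon_k}] \to F[\bar{x}]$; this is the sense in which the identity $\lim_{\varepsilon \to 0} F_\varepsilon[\bar{x}_\varepsilon] = F[\bar{x}]$ in the statement should be read (i.e.\ along the cluster subsequence).

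I do not anticipate any essential obstacle here: the argument is purely abstract and invokes nothing beyond the metric structure of $X$, the two one-sided inequalities defining $\Gamma$-convergence, and the variational inequality for minimizers. The only point that requires a little care is book-keeping of indices — (U) must be applied along the full family $\varepsilon$, where the minimality of $\bar{x}_\varepsilon$ is available for every parameter value, whereas (L) only yields information along the extracted subsequence $\varepsilon_k$ on which $\bar{x}_{\varepsilon_k} \to \bar{x}$.
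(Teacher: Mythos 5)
Your argument is correct and is exactly the classical proof of the fundamental theorem of $\Gamma$-convergence; the paper itself gives no proof of this lemma, deferring to the standard references, so there is nothing to diverge from. Your closing caveat is also well taken: as stated, with only a cluster point (rather than convergence of the whole family or equi-coercivity), the identity $\lim_{\varepsilon\to 0}F_\varepsilon[\bar{x}_\varepsilon]=F[\bar{x}]$ is only guaranteed along the extracted subsequence, together with the one-sided bound $\limsup_{\varepsilon\to 0}F_\varepsilon[\bar{x}_\varepsilon]\le\inf_X F$ for the full family.
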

For more details on $\Gamma$-convergence, see \cite{DalM93,Bra02,Bra14}.
The notion of $\Gamma$-convergence and its fundamental theorem guarantee the existence of minimizers of the functional $F$ when we do not know whether $F$ attains a minimum.
Moreover, they can be applied to the numerical analysis of variational problems.
Indeed, if we intend to compute a minimizer of a given functional $F$, we can take the following steps:
\begin{enumerate}[label=Step \arabic*., leftmargin=*]
\item We discretize the functional $F$ as functionals $F_h$ defined on finite-dimensional spaces.
\item We show that the sequence $\{ F_h \}_h$ $\Gamma$-converges to the functional $F$.
\item We show that each functional $F_h$ has at least one minimizer $\bar{x}_h$.
\item We show that the sequence $\{ \bar{x}_h \}_h$ has a cluster point $\bar{x}$.
\end{enumerate}
At this stage, we can establish that $F$ attains a minimum at $\bar{x}$, and we can extract a subsequence from $\{ \bar{x}_h \}_h$ that converges to the original solution $\bar{x}$, as a result of Lemma \ref{lem:fundamental}.
It is remarkable that this technique is applicable to a problem whose solution is not unique, such as \eqref{eq:problem0}.
There are several studies that develop the above method (cf.~\cite{BelC94,BalB91,AubBM04,CasZ11,BarMR12}).
We also follow the above strategy to accomplish goals (A) and (C).

Our principal results are the $\Gamma$-convergence and compactness results.
We show that the functional $E_{h,\delta,\rho}$ $\Gamma$-converges to $E$ in the topology of $H^1$, under the condition that $W^{1,\infty}$-norms are bounded (Theorem~\ref{thm:gamma}).
We also show that the sequence of minimizers of $\{ E_{h,\delta,\rho} \}_{h,\delta,\rho}$ has a cluster point under the same constraint as in the $\Gamma$-convergence result (Theorem~\ref{thm:compactness-S}).
It is essential in our proof that $W^{1,\infty}$-norms are bounded, especially in the proof of the compactness result.
Therefore, our principal results are ``local optimization'' in a sense.
We provide a sufficient condition for the global optimization of $E$ (Theorem \ref{thm:sufficient}).
However, we propose a conjecture that the functional $E$ might have no global minimizer in general (see numerical examples in Subsection~\ref{subsec:singular} and Remark \ref{rem:singular}).

The rest of the paper is organized as follows.
In Section~\ref{section:preliminary}, we present some preliminary results.
First, we formulate the problem \eqref{eq:problem0} in the framework of $\Gamma$-convergence, and then, we discretize the functional $E$.
We show some basic lemmas on the finite difference operators in the last part of Section~\ref{section:preliminary}.
The principal theorems are discussed in Sections~\ref{section:gamma-conv} and \ref{section:compactness}.
According to Definition~\ref{def:gamma-conv}, we need two properties (U) and (L) for $\Gamma$-convergence.
We prove property (U) in the early part of Section~\ref{section:gamma-conv} and property (L) in the last part.
The compactness result and a sufficient condition for global optimization are established in Section~\ref{section:compactness}.
Some numerical examples are presented in Section~\ref{section:num_exp}, and we conclude with some remarks in Section~\ref{section:conclusion}.
\section{Preliminaries}
\label{section:preliminary}
In this section, we formulate the original problem in terms of variational problems.
Then, we discretize the problem using the finite difference method.
We provide some basic properties of the finite difference operator in the last part of this section.

\subsection{Model problem and formulation}
\label{subsec:formulation}

We formulate the problem \eqref{eq:problem0} in terms of a variational problem.
We first set the function spaces as follows:
\begin{align}
H^1_\pi &:= \{ u \in H^1(0,1) \mid u(0) = u(1) \}, \\
H^2_\pi &:= \{ u \in H^2(0,1) \cap u \in H^1_\pi \mid u' \in H^1_\pi \} , \\
X^m &:= \{ u \in H^m_\pi \mid u \ge \psi \},
\quad m=1,2.
\end{align}
Then, we redefine a functional $E \colon H^1_\pi \to \bR \cup \{ +\infty \}$ as
\begin{equation}
E[u] := \begin{cases}
B[u] + T[u] - A[u], & u \in X^2, \\
+ \infty , & u \in H^1_\pi \setminus X^2,
\end{cases}
\end{equation}
where
\begin{equation}
B[u] := \frac{C}{2} \int \kappa^2 ds, \quad
T[u] := \sigma \int ds, \quad
A[u] := \gamma \int_{\{ u=\psi \}} ds.
\end{equation}
Note that $E[u]$ is finite for $u \in X^2$.
Now, we can formulate the problem \eqref{eq:problem0} as a minimization problem on $H^1_\pi$:
\begin{equation}
\minimize_{u \in H^1_\pi} E[u].
\label{eq:cont-problem}
\end{equation}
Unfortunately, we have not established global optimization \eqref{eq:cont-problem}, and we believe that the problem \eqref{eq:cont-problem} might not have a solution without any condition on the physical parameters $\psi$, $C$, $\sigma$, and $\gamma$.
See Remark~\ref{rem:singular} and Section~\ref{section:conclusion}.
According to the ``small slope approximation'' in \cite{Pie08}, we impose the following ``bounded slope condition.''
\begin{equation}
\minimize_{u \in X_S} E[u],
\label{eq:cont-problem-S}
\end{equation}
where
\begin{equation}
X_S := \{ v \in H^1_\pi \cap W^{1,\infty}(0,1) \mid \| v \|_{W^{1,\infty}(0,1)} \le S \},
\end{equation}
for $S>0$. The topology of $X_S$ is the one induced from $H^1_\pi$.
\subsection{Discretization of the problem}
\label{subsec:discretization}
In order to compute numerically a (local or global) minimizer of $E$, we will discretize the problem \eqref{eq:cont-problem}.
Typically, the Euler-Lagrange equation is used for minimization problems, and is then solved by appropriate numerical methods, such as the finite element method.
However, the Euler-Lagrange equation of the functional $E$ is very complicated.
Thus, we will decline the use of this equation, and instead,
compute minimizers directly by discretizing the functional $E$ on a space of polygonal curves.

Let $N \in \bN$, $h=1/N$, $x_j = jh$ ($j=0,1,\dots,N$), and $I_j = (x_{j-1}, x_j) \subset \bR$ ($j=1,2,\dots,N$). We define $V_h$ as the space of periodic polygonal curves with respect to the partition $\{ x_j \}$, that is,
\begin{equation}
V_h = \{ v_h \in C^0_\pi \mid v_h|_{I_j} \text{ is affine } \forall j.  \},
\end{equation}
where $C^0_\pi := \{ v \in C^0[0,1] \mid v(0) = v(1) \}$. We introduce some notation. For $v_h \in V_h$ and $j = 0,1,\dots, N$, let
\begin{equation}
\left.
\begin{array}{c}
v_j = v_h(x_j),
\quad d_j = d_j(v_h) = \dfrac{v_j - v_{j-1}}{h},
\quad D_j = D_j(v_h) = \dfrac{v_{j+1} - 2v_j + v_{j-1}}{h^2}, \\[1ex]
l_j = l_j(v_h)
= \dfrac{h}{2} \sqrt{1 + d_j^2},
\quad \theta_j = \theta_j(v_h) = \arccos
\left[
\dfrac{1+d_j d_{j+1}}{\sqrt{1 + d_j^2} \sqrt{1 + d_{j+1}^2}}
\right].
\end{array}
\right\}
\label{eq:discretize}
\end{equation}
Here, and hereafter, we extend indices periodically, for example, $v_{-1} = v_{N-1}$, $v_{N+1} = v_1$, and so on.
Note that $2l_j$ is the length of $v_h$ on $I_j$ and that $\theta_j$ is the angle between two vectors $(1,d_j)$ and $(1, d_{j+1})$ (cf.\ Figure \ref{fig:polygon}).
\begin{figure}[htb]
\centering
\includegraphics{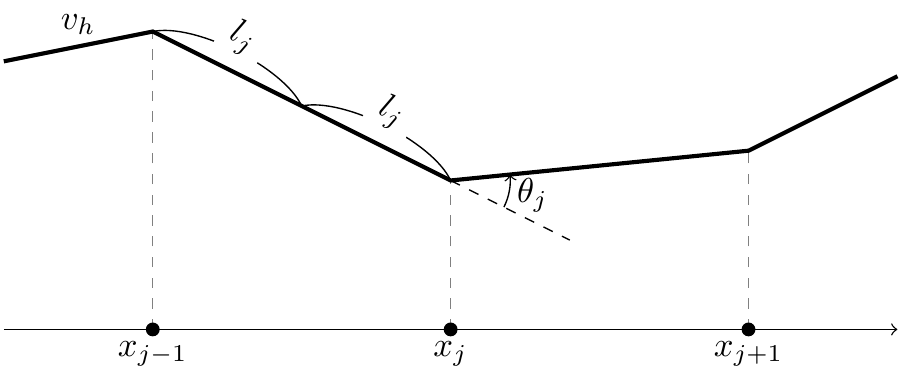}
\caption{Definition of $l_j$ and $\theta_j$.}
\label{fig:polygon}
\end{figure}
Moreover, to avoid the discontinuity caused by the adhesion energy, we regularize the characteristic function.
Fix $\zeta \in C^1(\bR)$ satisfying the following properties (see Figure \ref{fig:regularization}):
\begin{equation}
0 \le \zeta \le 1, \quad \zeta(-t) = \zeta(t), \quad
\zeta|_{(1,\infty)} \equiv 0, \quad
\zeta'|_{[0,1]} \le 0, \quad
\zeta(0) = 1.
\end{equation}
Then, we introduce the regularization parameter $\delta>0$ and set $\zeta_\delta(t) = \zeta(t/\delta)$ ($t \in \bR$).
For $v_h \in V_h$, $|v_j - \psi_j|$ is the distance between $v_h$ and the obstacle $\psi$ at $x_j$, where $\psi_j = \psi(x_j)$.
Let
\begin{equation}
\zeta_{\delta,j} := \zeta_\delta(v_j - \psi_j).
\end{equation}
This describes whether $v_h$ adheres to $\psi$ at $x_j$.
That is, if $|v_j - \psi_j| < \delta (\iff \zeta_{\delta,j} > 0)$, then we judge that $v_h$ ``adheres'' to $\psi$ at $x_j$.
\begin{figure}[htb]
\centering
\includegraphics{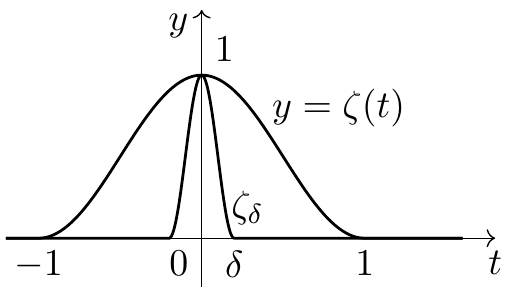}
\caption{Regularization functions $\zeta$ and $\zeta_\delta$.}
\label{fig:regularization}
\end{figure}

Now, we are ready to discretize the functional $E$.
Define the discrete bending energy and discrete adhesion energy as
\begin{equation}
B_h[v_h] := \frac{C}{2} \sum_{j=1}^N \theta_j^2 \frac{l_j^3 + l_{j+1}^3}{l_j l_{j+1} (l_j + l_{j+1})^2},
\quad
A_{h,\delta}[v_h] := \gamma \sum_{j=1}^{N} \zeta_{\delta,j-1} \zeta_{\delta,j} \cdot 2l_j.
\end{equation}
The definition of $B_h$ was introduced by \cite{BruNR01} and \cite{BruHN01}.
Note that
\begin{equation}
\theta_j^2 \frac{l_j^3 + l_{j+1}^3}{l_j l_{j+1} (l_j + l_{j+1})^2}
= \frac{\theta_j^2}{l_j + l_{j+1}}\left( \frac{l_{j+1}}{l_j} -1 + \frac{l_j}{l_{j+1}} \right)
\ge \frac{\theta_j^2}{l_j + l_{j+1}}.
\label{eq:bending-0}
\end{equation}
This relation is used several times later.
In addition, we introduce the penalty term
\begin{equation}
P_{h,\rho}[v_h] := \frac{1}{\rho} \sum_{j=1}^N |(v_j - \psi_j)_-|^2 h,
\end{equation}
where the penalty parameter $\rho>0$ is a small positive number and $(x)_- = \min\{ 0, -x \}$ is the negative part of the real number.
Then, we define the discrete energy functional $E_{h,\delta,\rho}$ as
\begin{equation}
E_{h,\delta,\rho}[v] :=
\begin{cases}
B_h[v] + T[v] - A_{h,\delta}[v] + P_{h,\rho}[v], & v \in V_h, \\
+\infty, & v \in H^1_\pi \setminus V_h
\end{cases}
\label{eq:disc-energy}
\end{equation}
for $v \in H^1_\pi$.
The discrete problems corresponding to \eqref{eq:cont-problem} and \eqref{eq:cont-problem-S} are, respectively, formulated as
\begin{equation}
\minimize_{u \in H^1_\pi} E_{h,\delta,\rho}[u]
\label{eq:disc-problem}
\end{equation}
and
\begin{equation}
\minimize_{u \in X_S} E_{h,\delta,\rho}[u]
\label{eq:disc-problem-S}
\end{equation}
for $S>0$.
Since $V_h \cap X_S$ is compact and $E_{h,\delta,\rho}$ is continuous on $V_h$, the problem \eqref{eq:disc-problem-S} has at least one solution.
\begin{lem}\label{lem:boundedness}
For each $S>0$, the functional $E_{h,\delta,\rho}$ admits at least one minimizer in $X_S$.
Moreover, if there exists $c_0 > 0$ such that $\delta \le c_0 h$, then the minimizer $\bar{v}^{(S)}_{h,\delta,\rho} \in V_h$ satisfies
\begin{equation}
\| \bar{v}^{(S)}_{h,\delta,\rho} \|_{L^\infty(0,1)} \le M
\end{equation}
for some $M>0$, which is independent of $h$, $\delta$, $\rho$, and $S$.
\end{lem}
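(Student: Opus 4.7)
Existence is straightforward: identifying $V_h$ with $\bR^N$ via nodal values, the constraint set $V_h\cap X_S$ is $\{(v_j): \max_j|v_j|\le S,\ \max_j|d_j|\le S\}$, which is closed and bounded, hence compact, while each term of $E_{h,\delta,\rho}$ is continuous on $V_h$ (using $\zeta\in C^1$), so Weierstrass yields a minimizer $\bar v:=\bar v^{(S)}_{h,\delta,\rho}$.

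For the $L^\infty$ bound I plan to combine three a priori estimates, all uniform in the parameters. First, using $\delta\le c_0 h$, I will bound the adhesion functional: whenever $\zeta_{\delta,j-1}(v)\zeta_{\delta,j}(v)>0$ both $v_{j-1}$ and $v_j$ lie within $\delta$ of $\psi$, so $|d_j|\le 2\delta/h+\|\psi'\|_{L^\infty}\le 2c_0+\|\psi'\|_{L^\infty}$ and $2l_j\le h\sqrt{1+(2c_0+\|\psi'\|_{L^\infty})^2}$; summing over at most $N=1/h$ adhering indices gives $A_{h,\delta}[v]\le C_1$ for every $v\in V_h$, with $C_1$ depending only on $c_0,\gamma$ and $\psi$. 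Second, for $S$ above a fixed threshold (otherwise $\|\bar v\|_{L^\infty}\le S$ is already the desired bound), the nodal interpolant $\psi_h$ is a valid competitor with $P_{h,\rho}[\psi_h]=0$, giving $E_{h,\delta,\rho}[\bar v]\le E_{h,\delta,\rho}[\psi_h]\le B_h[\psi_h]+\sigma L(\psi_h)=:C_2$, bounded independently of $h,\delta,\rho,S$. Third, using $B,P\ge 0$, I deduce $T[\bar v]\le C_1+C_2$, so $L(\bar v)\le (C_1+C_2)/\sigma$, and periodicity of the graph forces $L(\bar v)\ge 2(\max\bar v-\min\bar v)$, yielding the oscillation bound $\max\bar v-\min\bar v\le (C_1+C_2)/(2\sigma)$.

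To upgrade this oscillation bound to a genuine $L^\infty$ bound, I anchor $\bar v$ via adhesion. If $A_{h,\delta}[\bar v]=0$, then $E_{h,\delta,\rho}[\bar v]\ge T[\bar v]\ge\sigma$; however, the constant competitor $v^*\equiv\max\psi$ satisfies $E_{h,\delta,\rho}[v^*]=\sigma-A_{h,\delta}[v^*]<\sigma$, because smoothness of $\psi$ at its maximum combined with $\delta\le c_0 h$ ensures that the super-level set $\{\psi>\max\psi-\delta\}$ contains at least two adjacent grid nodes, making $A_{h,\delta}[v^*]>0$. This contradiction produces an index $j_0$ with $|\bar v_{j_0}-\psi_{j_0}|<\delta$, so $|\bar v_{j_0}|\le\|\psi\|_{L^\infty}+c_0$; combined with the oscillation bound one obtains $\|\bar v\|_{L^\infty}\le\|\psi\|_{L^\infty}+c_0+(C_1+C_2)/(2\sigma)=:M$, independent of $h,\delta,\rho,S$. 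The principal obstacle is this anchoring step: the penalty $P_{h,\rho}$ alone would give only a $\rho$-dependent lower bound on $\min\bar v$ and no upper bound on $\max\bar v$, so the $\rho$-independence of $M$ must come from the adhesion mechanism, which makes the quantitative lower bound $A_{h,\delta}[\max\psi]>0$ under $\delta\le c_0 h$ the key technical point of the proof.
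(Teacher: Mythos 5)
Your existence argument and your first three estimates are sound and run essentially parallel to the paper: your bound $A_{h,\delta}[v]\le C_1$ for adhering segments is the paper's estimate \eqref{eq:sufficient-2}, your competitor bound on the minimal energy matches the paper's use of a constant competitor $c_h$ (you use $\Pi_h\psi$ instead, which works equally well once $S$ exceeds $\|\psi\|_{W^{1,\infty}(0,1)}$, as you note), and the observation that the tension term controls the oscillation $\max\bar v-\min\bar v$ is exactly how the paper concludes.

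The gap is in your anchoring step. You claim that $\delta\le c_0h$ together with smoothness of $\psi$ forces the super-level set $\{\psi>\max\psi-\delta\}$ to contain two adjacent grid nodes, so that $A_{h,\delta}[v^*]>0$ for $v^*\equiv\max\psi$. But $\delta\le c_0h$ is only an \emph{upper} bound on $\delta$: nothing prevents $\delta$ from being far smaller than $h$. If $\psi$ has a nondegenerate maximum at $x^*$, then $\{\psi>\max\psi-\delta\}$ is an interval of width of order $\sqrt{\delta/\|\psi''\|_{L^\infty(0,1)}}$, and for it to contain two adjacent nodes one needs roughly $\delta\gtrsim\|\psi''\|_{L^\infty(0,1)}\,h^2$, which is not implied by the hypotheses (take $\delta=h^3$). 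In that case $A_{h,\delta}[v^*]=0$, $E_{h,\delta,\rho}[v^*]=\sigma$, and no contradiction arises; worse, the intermediate fact you are after (some node of $\bar v$ lies within $\delta$ of $\psi$) can genuinely fail, since when adhesion is never switched on every constant above $\max\psi+\delta$ in $X_S$ has the same energy, so a minimizer need not sit near the obstacle at all. The paper anchors differently: assuming $\min\bar v\ge\max\psi+\delta$ (so $A_{h,\delta}[\bar v]=0$), it subtracts a constant from $\bar v$ — which leaves $B_h$ and $T$ unchanged — to strictly decrease the energy, contradicting minimality; this gives \eqref{eq:minimum-1}, i.e.\ $\min\bar v<\max\psi+\delta$, and the oscillation bound together with the normalization $\bar v\ge\psi\ge0$ then yields the $L^\infty$ bound. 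You should replace your competitor comparison by this translation argument (that vertical translation is the point where the adhesion gain is actually realized), or else add to the hypotheses a lower bound on $\delta$ of order $h^2$, which the lemma does not assume.
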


\begin{proof}
The existence of a minimizer has already been observed.
Let us prove the boundedness of the minimizer by contradiction.
We can assume $\bar{v}^{(S)}_{h,\delta,\rho} \ge \psi \ge 0$ by adding some constants if necessary.
We first suppose that
\begin{equation}
\min_{x \in (0,1)} \bar{v}^{(S)}_{h,\delta,\rho}(x)
\ge \max_{x \in (0,1)} \psi(x) + \delta.
\end{equation}
Then, we can find a constant $c>0$ that satisfies
\begin{equation}
E_{h,\delta,\rho}[\bar{v}^{(S)}_{h,\delta,\rho} - c]
< E_{h,\delta,\rho}[\bar{v}^{(S)}_{h,\delta,\rho}],
\end{equation}
which is impossible.
Thus,
\begin{equation}
\min_{x \in (0,1)} \bar{v}^{(S)}_{h,\delta,\rho}(x)
< \max_{x \in (0,1)} \psi(x) + \delta.
\label{eq:minimum-1}
\end{equation}

Let $J_{h,\delta} := \{ j=1,\dots,N \mid \zeta_{\delta,j-1} \zeta_{\delta,j} > 0 \}$.
Then, for every $v_h \in V_h$ and $j \in J_{h,\delta}$, we have
\begin{equation}
|v_h(x_j) - v_h(x_{j-1})| \le |\psi(x_j) - \psi(x_{j-1})| + 2\delta.
\label{eq:sufficient-1}
\end{equation}
Therefore, the condition $\delta \le c_0 h$ implies
\begin{align}
-A_{h,\delta}[v_h]
&\le \gamma \sum_{j \in J_{h,\delta}} \sqrt{(|\psi(x_j)- \psi(x_{j-1})| + 2\delta)^2 + h^2} \\
&\le 4\gamma \sum_{j=1}^N \left[ \sqrt{(\psi(x_j) - \psi(x_{j-1}))^2 + h^2} + \delta \right] \\
&\le 4\gamma (T[\psi]/\sigma + c_0)
\label{eq:sufficient-2}
\end{align}
for $v_h \in V_h$.
Now, let us assume that for each $M>0$, there exist $h$, $\delta$, $\rho$, and $S$ such that
\begin{equation}
\| \bar{v}^{(S)}_{h,\delta,\rho} \|_{L^\infty(0,1)} =
\max_{x \in (0,1)} \bar{v}^{(S)}_{h,\delta,\rho}(x) > M.
\label{eq:minimum-2}
\end{equation}
Then, noting that
\begin{equation}
T[\bar{v}^{(S)}_{h,\delta,\rho}] \ge \sigma \left(
\max_{x \in (0,1)} \bar{v}^{(S)}_{h,\delta,\rho}(x) -
\min_{x \in (0,1)} \bar{v}^{(S)}_{h,\delta,\rho}(x) \right),
\end{equation}
we have
\begin{equation}
E_{h,\delta,\rho}[\bar{v}^{(S)}_{h,\delta,\rho}]
\ge T[\bar{v}^{(S)}_{h,\delta,\rho}] - A_{h,\delta}[\bar{v}^{(S)}_{h,\delta,\rho}]
\ge M - \| \psi \|_{L^\infty(0,1)} - \delta - 4\gamma (T[\psi]/\sigma + c_0)
\end{equation}
by \eqref{eq:minimum-1}, \eqref{eq:sufficient-2}, and \eqref{eq:minimum-2}.
Thus, if we choose
\begin{equation}
M = 1+\sigma + \| \psi \|_{L^\infty(0,1)} + \delta + 4\gamma (T[\psi]/\sigma + c_0),
\end{equation}
then
\begin{equation}
E_{h,\delta,\rho}[\bar{v}^{(S)}_{h,\delta,\rho}]
\ge 1 + \sigma = 1 + E_{h,\delta,\rho}[c_h]
\end{equation}
for some constant function $c_h \in V_h$, which is a contradiction.
Hence, we can complete the proof.
\end{proof}

\subsection{Preliminaries on finite difference operators}
\label{subsec:FD-operators}
We introduce some basic properties of finite difference operators in one dimension.
We first define the operators $d_{h,j}$ and $D_{h,j}$ as
\begin{align}
d_{h,j}v &:= \frac{v(x_j) - v(x_{j-1})}{h}, \\
D_{h,j}v &:= \frac{v(x_{j+1}) -2v(x_j) + v(x_{j-1})}{h^2},
\end{align}
for $j=1,\dots,N$, $v \in C^0_\pi$.
\begin{lem}\label{lem:fdm}
The following statements hold.
\begin{enumerate}
\item If $v \in H^1_\pi$, then for each $j$,
\begin{equation}
|d_{h,j} v| \le h^{-1/2} \| v' \|_{L^2(I_j)}.
\label{eq:lem-fdm-1}
\end{equation}
\item If $v \in H^2_\pi$, then for each $j$ and $x \in I_j$,
\begin{equation}
|d_{h,j}v - v'(x)| \le \sqrt{\frac{2}{3}} h^{1/2} \| v'' \|_{L^2(I_j)}.
\label{eq:lem-fdm-2}
\end{equation}
\item If $v \in H^2_\pi$, then
\begin{equation}
\sum_{j=1}^{N} |D_{h,j} v|^2 h
\le \frac{4}{3} \| v'' \|_{L^2(0,1)}^2.
\label{eq:lem-fdm-3}
\end{equation}
\end{enumerate}
\end{lem}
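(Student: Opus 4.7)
\medskip

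\noindent\textbf{Proof proposal.} The three statements are standard estimates for the finite difference operators, and my plan is to treat them in order using integral representations together with the Cauchy--Schwarz inequality.

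For (i), the idea is to write
\begin{equation}
d_{h,j}v = \frac{1}{h}\int_{I_j} v'(y)\,dy
\end{equation}
and apply Cauchy--Schwarz to the right-hand side, which gives the factor $|I_j|^{1/2} = h^{1/2}$, immediately yielding \eqref{eq:lem-fdm-1}.

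For (ii), the plan is to insert $v'(y) = v'(x) + \int_x^y v''(z)\,dz$ into the representation above, so that
\begin{equation}
d_{h,j}v - v'(x) = \frac{1}{h}\int_{I_j}\int_x^y v''(z)\,dz\,dy.
\end{equation}
Exchanging the order of integration on the subintervals $(x_{j-1},x)$ and $(x,x_j)$ turns the double integral into a Peano-kernel form
\begin{equation}
\frac{1}{h}\left[\int_x^{x_j} v''(z)(x_j - z)\,dz - \int_{x_{j-1}}^x v''(z)(z - x_{j-1})\,dz\right].
\end{equation}
Cauchy--Schwarz on each piece produces $\|v''\|_{L^2(I_j)}^2 \cdot ((x_j-x)^3 + (x-x_{j-1})^3)/3$ after squaring. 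The key point is that the cubic expression in $x$ is convex on $I_j$, so its maximum over $x \in I_j$ is attained at an endpoint and equals $h^3$; this is what produces the constant $\sqrt{2/3}$ in \eqref{eq:lem-fdm-2}.

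For (iii), the strategy is analogous. I write $h^2 D_{h,j}v = \int_{I_{j+1}}v'(y)\,dy - \int_{I_j}v'(y)\,dy$, subtract the vanishing quantity $v'(x_j)h - v'(x_j)h$, and after exchanging the order of integration obtain the Peano-kernel representation
\begin{equation}
h^2 D_{h,j}v = \int_{x_j}^{x_{j+1}} v''(z)(x_{j+1}-z)\,dz + \int_{x_{j-1}}^{x_j} v''(z)(z-x_{j-1})\,dz.
\end{equation}
Applying Cauchy--Schwarz to each term and using $\int_0^h t^2\,dt = h^3/3$ gives a pointwise bound $|D_{h,j}v|^2 \le (2/(3h))(\|v''\|_{L^2(I_j)}^2 + \|v''\|_{L^2(I_{j+1})}^2)$. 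Summing over $j$, each subinterval $I_j$ is counted twice by the periodic indexing, which produces exactly the factor $4/3$ in \eqref{eq:lem-fdm-3}.

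The routine part is Cauchy--Schwarz; the only delicacy is choosing the right Peano-kernel representation so that the constants $\sqrt{2/3}$ and $4/3$ come out sharp, and in (ii) checking by convexity that the supremum in $x$ of the kernel is attained at an endpoint of $I_j$. Periodicity is needed in (iii) only to make the final sum over $j$ cover $(0,1)$ twice without boundary artifacts.
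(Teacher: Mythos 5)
Your proof is correct and follows essentially the same route as the paper: the identical Peano-kernel representations for $d_{h,j}v - v'(x)$ and $D_{h,j}v$ followed by the Cauchy--Schwarz inequality, with the factor $4/3$ in (iii) arising because each $I_j$ is counted twice in the periodic sum. One minor remark: in (ii) your computation actually yields the sharper constant $h^{1/2}/\sqrt{3}$ (since $(x_j-x)^3+(x-x_{j-1})^3\le h^3$ on $I_j$), which of course implies the stated bound with $\sqrt{2/3}\,h^{1/2}$, so the claim that this "produces" $\sqrt{2/3}$ is slightly off but harmless.
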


\begin{proof}
The first assertion \eqref{eq:lem-fdm-1} is a simple consequence of the fundamental theorem of calculus and the H\"older inequality.
We show the assertions \eqref{eq:lem-fdm-2} and \eqref{eq:lem-fdm-3}.
From the fundamental theorem of calculus and Fubini's theorem, we have
\begin{align}
d_{h,j}v - v'(x) &= \frac{1}{h} \int_{x_{j-1}}^{x_j} \phi(y) v''(y) dy, &
\phi(y) &= \begin{cases}
x_{j-1} - y, & x_{j-1} < y < x, \\
x_j - y, & x < y < x_j,
\end{cases} \\
D_{h,j}v &= \frac{1}{h^2} \int_{x_{j-1}}^{x_{j+1}} \psi(y) v''(y) dy,&
\psi(y) & = \begin{cases}
y - x_{j-1}, & y \in I_j, \\
x_{j+1} - y, & y \in I_{j+1}.
\end{cases}
\label{eq:D_j}
\end{align}
Hence, the H\"older inequality implies the desired estimate.
\end{proof}

\section{$\Gamma$-convergence of the discrete functional}
\label{section:gamma-conv}

In this section, we show that the discrete functional $E_{h,\delta,\rho}$ $\Gamma$-converges to the original functional $E$, which is introduced in Definition~\ref{def:gamma-conv}.
The main result of this section is the following $\Gamma$-convergence theorem.
\begin{thm}\label{thm:gamma}
Let $S>0$.
The functional $E_{h,\delta,\rho}$ $\Gamma$-converges to $E$ as $h,\delta,\rho \downarrow 0$ in the topology of $X_S$.
\end{thm}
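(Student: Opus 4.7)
The plan is to verify the two properties in Definition~\ref{def:gamma-conv} by splitting the discrete energy into its four constituent pieces $B_h$, $T$, $A_{h,\delta}$, and $P_{h,\rho}$ and analyzing the limit of each against the corresponding piece of $E = B + T - A$. Throughout, the $W^{1,\infty}$-bound defining $X_S$ will be crucial, as it controls $l_j$ and $1+d_j^2$ uniformly from above (and from below by $1$).

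\textbf{Property (U).} Given $u \in X_S$, if $u \notin X^2$ then $E[u] = +\infty$ and any sequence works; otherwise $u \in H^2_\pi$ with $u \ge \psi$ and $\|u\|_{W^{1,\infty}} \le S$. As a recovery sequence I would take the piecewise linear interpolant $u_{h,\delta,\rho} := I_h u \in V_h$. Standard interpolation estimates give $I_h u \to u$ in $H^1$, and the pointwise slopes of $I_h u$ are bounded by $\|u'\|_{L^\infty} \le S$, so $I_h u \in V_h \cap X_S$. Because $(I_h u)(x_j) = u(x_j) \ge \psi(x_j)$, the penalty $P_{h,\rho}[I_h u]$ vanishes identically, and continuity of $T$ in $H^1$ handles the length term. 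For $B_h[I_h u] \to B[u]$ I would use Lemma~\ref{lem:fdm} to show that $d_j(I_h u) = d_{h,j}u$ and the discrete ``curvature'' built from $\theta_j$ and $l_j$, $l_{j+1}$ approximate $u'$ and $\kappa$ respectively in an $L^2$-sense when $u \in H^2$; this is the discretization of Bruckstein--Netravali--Richardson. For $A_{h,\delta}[I_h u] \to A[u]$ I would split the sum according to whether both $x_{j-1}, x_j$ lie in $\{u = \psi\}$: on such indices $\zeta_{\delta,j-1}\zeta_{\delta,j} = 1$ and $d_j(I_h u) = d_j(I_h \psi)$, so the contributions form a Riemann sum for $\int_{\{u=\psi\}} \sqrt{1+(\psi')^2}\,dx$, while indices outside a fixed neighborhood of $\{u = \psi\}$ contribute zero for $\delta$ small and the $O(1)$ many boundary indices contribute $O(h)$.

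\textbf{Property (L).} Take an arbitrary sequence $u_{h,\delta,\rho} \to u$ in $X_S$. After discarding the trivial case $\liminf E_{h,\delta,\rho} = +\infty$, I may assume $E_{h,\delta,\rho}[u_{h,\delta,\rho}] \le C$ along a subsequence. The one-dimensional Sobolev embedding $H^1_\pi \hookrightarrow C^0$ combined with boundedness of $P_{h,\rho}$ and $\rho \to 0$ forces $(u-\psi)_- \equiv 0$, i.e.\ $u \ge \psi$. Boundedness of $B_h$ together with the lower bound \eqref{eq:bending-0} and the $W^{1,\infty}$ control on $l_j$ yields a uniform $\ell^2_h$-bound on the discrete second differences $D_{h,j}u_{h,\delta,\rho}$; via an auxiliary smooth interpolant this gives a uniform $H^2_\pi$-bound, so a subsequence converges weakly in $H^2$ to a limit which must coincide with the $H^1$-limit $u$. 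Hence $u \in X^2$, and weak $H^2$-lower semicontinuity of the curvature-squared functional gives $\liminf B_h[u_{h,\delta,\rho}] \ge B[u]$. Continuity of $T$ handles the length term. For $\limsup A_{h,\delta}[u_{h,\delta,\rho}] \le A[u]$, uniform convergence and $\delta \to 0$ imply that any index contributing to the discrete sum in the limit must satisfy $u(x_j) = \psi(x_j)$; combining this with $u' = \psi'$ a.e.\ on $\{u=\psi\}$ controls the limiting sum by $\int_{\{u=\psi\}} \sqrt{1+(\psi')^2}\,dx = A[u]/\gamma$.

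\textbf{Main obstacle.} The adhesion term is the hardest ingredient in both directions: the continuous $A$ is discontinuous via the coincidence set while $A_{h,\delta}$ is a nodal mollification, so handling the free boundary $\partial\{u=\psi\}$ requires the identity $u'=\psi'$ a.e.\ on $\{u=\psi\}$ together with a careful book-keeping of boundary indices. A secondary difficulty is upgrading from a discrete bending bound to $H^2$-regularity of the limit in property~(L); this is precisely the step that cannot proceed without the $W^{1,\infty}$ bound built into $X_S$, since otherwise the denominators $l_j l_{j+1} (l_j + l_{j+1})^2$ in $B_h$ cannot be controlled from below.
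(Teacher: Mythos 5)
Your overall architecture (term-by-term analysis of $B_h$, $T$, $A_{h,\delta}$, $P_{h,\rho}$, with the Lagrange interpolant as recovery sequence) matches the paper's, and your treatment of the tension, penalty, and adhesion terms is essentially the paper's argument in sketch form. The genuine gap is in the liminf inequality for the bending term. You derive a uniform bound on the discrete second differences and then invoke ``weak $H^2$-lower semicontinuity of the curvature-squared functional'' applied to an auxiliary interpolant to conclude $\liminf_h B_h[u_h] \ge B[u]$. But the only comparison you have in hand is of the form \eqref{eq:bending-10}, namely
\begin{equation}
B_h[u_h] \ \ge\ \frac{C}{2}\,\frac{1}{(1+S^2)^{5/2}} \sum_{j=1}^N |D_{h,j}(u_h)|^2\, h ,
\end{equation}
which controls $B_h$ from below only up to the constant $(1+S^2)^{-5/2}<1$. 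That suffices for compactness, but lower semicontinuity applied to the auxiliary interpolant then yields at best $\liminf_h B_h[u_h] \ge c_S\, B[u]$ with $c_S<1$, which is not the $\Gamma$-liminf inequality. The missing idea is an \emph{asymptotically exact} identification of the discrete bending energy with the continuous bending energy of some $C^{1,1}$ curve close to $u_h$: the paper constructs the piecewise circular-arc spline $\tilde v_h$ of Bruckstein--Netravali--Richardson, for which $B[\tilde v_h] = B_h[u_h](1+O(\theta_h^2))$ with $\theta_h = \max_j|\theta_j|\to 0$ (itself a consequence of the energy bound and $l_h\to 0$, which you do not address for an \emph{arbitrary} competitor sequence), shows $\tilde v_h \to u$ in $H^1$, and then uses lower semicontinuity of $B$ along $H^1$-convergent sequences (proved via the antiderivative $G(t)=\int_{-\infty}^t(1+s^2)^{-5/4}\,ds$, precisely because weak $H^2$ convergence of the arcs is not available). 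Without this, or an equivalent argument showing that $B_h[u_h]$ and the continuous bending energy of your interpolant agree to leading order, the step fails.

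A secondary, smaller issue: in the (U) direction your book-keeping for the adhesion term assumes the coincidence set $\{u=\psi\}$ contributes only ``$O(1)$ many boundary indices.'' The topological boundary of $\{u=\psi\}$ need not be finite, so the Riemann-sum argument as stated only recovers $\gamma\int_{\operatorname{Int}\{u=\psi\}}\sqrt{1+|\psi'|^2}\,dx$; you need an additional argument (as in Lemma \ref{lem:adhesion}, via pointwise convergence of $\chi_{h,\delta}$ and dominated convergence) to pass from the interior of the coincidence set to the full set.
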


\subsection{Lower-order terms}
Let us first consider the lower-order terms.
In the following discussion, we denote the Lagrange interpolation with respect to the nodes $\{ x_j \}$ by $\Pi_h$.
\begin{lem}\label{lem:tension}
The functional $T$ is continuous in $H^1_\pi$.
\end{lem}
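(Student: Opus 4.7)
The plan is to use the fact that the integrand $f(t) := \sqrt{1+t^2}$ defining $T[u] = \sigma \int_0^1 \sqrt{1 + (u'(x))^2}\, dx$ is globally Lipschitz with Lipschitz constant $1$, and to convert convergence in $H^1_\pi$ into convergence of $T$ via the continuous embedding $L^2(0,1) \hookrightarrow L^1(0,1)$.

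First, I would verify the pointwise Lipschitz estimate
\begin{equation}
\bigl|\sqrt{1+a^2} - \sqrt{1+b^2}\bigr| \le |a-b| \qquad (a,b \in \bR),
\end{equation}
which follows by rationalizing: the left-hand side equals $|a-b|\,|a+b|/(\sqrt{1+a^2}+\sqrt{1+b^2})$, and since $|a| \le \sqrt{1+a^2}$ and $|b| \le \sqrt{1+b^2}$, the factor $|a+b|/(\sqrt{1+a^2}+\sqrt{1+b^2})$ is bounded by $1$.

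Next, given a sequence $u_n \to u$ in $H^1_\pi$, in particular $u_n' \to u'$ in $L^2(0,1)$, I would apply the pointwise estimate with $a = u_n'(x)$ and $b = u'(x)$ and integrate:
\begin{equation}
|T[u_n] - T[u]| \le \sigma \int_0^1 \bigl|\sqrt{1+(u_n')^2} - \sqrt{1+(u')^2}\bigr|\, dx \le \sigma \int_0^1 |u_n' - u'|\, dx.
\end{equation}
The Cauchy--Schwarz inequality on the bounded interval $(0,1)$ yields $\int_0^1 |u_n' - u'|\, dx \le \|u_n' - u'\|_{L^2(0,1)}$, which tends to $0$. Hence $T[u_n] \to T[u]$, proving continuity.

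There is essentially no obstacle here; the only subtlety is recognizing that the square-root integrand is Lipschitz so that no uniform bound on $\|u'\|_{L^\infty}$ (and thus no reliance on the constraint set $X_S$) is needed. The argument works for all of $H^1_\pi$, which is stronger than required and will be convenient later when $T$ is compared with its polygonal counterpart.
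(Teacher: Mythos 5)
Your proof is correct and follows exactly the paper's argument: the paper's own proof consists of citing the Lipschitz bound $|\sqrt{1+s^2}-\sqrt{1+t^2}|\le|s-t|$ (their equation for $\sqrt{\cdot}$) together with the H\"older inequality, which is precisely your pointwise estimate plus Cauchy--Schwarz. You have simply filled in the details the paper leaves implicit.
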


\begin{proof}
The assertion follows from the identity
\begin{equation}
|\sqrt{1+s^2} - \sqrt{1+t^2}| \le |s-t|
\label{eq:sqrt}
\end{equation}
and the H\"older inequality.
\end{proof}

\begin{lem}\label{lem:adhesion}
The functional $A_{h,\delta}$ satisfies the following two assertions.
\begin{enumerate}
\item Let $v \in X^2$ and $v_{h,\delta} \in V_h$ with $v_{h,\delta} \to u$ in $H^1_\pi$.
Then,
\begin{equation}
\liminf_{h \downarrow 0} (-A_{h,\delta}[v_{h,\delta}]) \ge -A[v].
\label{eq:lsc-A}
\end{equation}
\item If $v \in X^2$, then $\lim_{h \downarrow 0} A_{h,\delta}[\Pi_h v] = A[v]$.
\end{enumerate}
\end{lem}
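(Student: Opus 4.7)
The plan is to exploit two tools throughout: the exact identity $2l_j = \int_{I_j}\sqrt{1+(v_h')^2}\,dx$ for every piecewise linear $v_h \in V_h$, and the fact that $v' = \psi'$ almost everywhere on the contact set $\Sigma := \{v=\psi\}$ for $v\in X^2$, which follows because $v-\psi \ge 0$ attains its minimum at every point of $\Sigma$ and is $C^1$. It is convenient to rewrite (i) as the equivalent statement $\limsup A_{h,\delta}[v_{h,\delta}] \le A[v]$ and, in both parts, to treat $\delta$ as vanishing with $h$ in accordance with Theorem~\ref{thm:gamma}.

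For (i), I would first use $0\le \zeta_\delta \le 1$ to bound
\[
A_{h,\delta}[v_{h,\delta}] \le \gamma \sum_{j \in J_{h,\delta}} 2l_j = \gamma\int_{E_{h,\delta}} \sqrt{1+(v_{h,\delta}')^2}\,dx,
\]
where $E_{h,\delta} := \bigcup_{j\in J_{h,\delta}} I_j$ and $J_{h,\delta}$ is the set introduced in the proof of Lemma~\ref{lem:boundedness}. The key step is to localize $E_{h,\delta}$ near $\Sigma$. The Sobolev embedding $H^1_\pi \hookrightarrow C^0_\pi$ yields uniform convergence $v_{h,\delta} \to v$, so for any $\eta>0$ and all sufficiently small $h,\delta$, every node $x_j$ with $\zeta_{\delta,j}>0$ satisfies $|v(x_j)-\psi(x_j)|<\eta$, and continuity of $v-\psi$ forces $E_{h,\delta} \subset N_\eta := \{x : (v-\psi)(x)<2\eta\}$. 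Combining this containment with the $L^2$-convergence of $v_{h,\delta}'$, letting $\eta \downarrow 0$, and using $v'=\psi'$ a.e.\ on $\Sigma$, gives the claimed bound.

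For (ii), I would partition $\{1,\ldots,N\}$ into three classes: indices $j$ with both $x_{j-1},x_j \in \Sigma$, indices with both nodes at distance strictly greater than $\delta$ from $\psi$, and a remaining \emph{transition} class. On the first class, $\Pi_h v$ coincides with $\Pi_h \psi$ on $I_j$ and the regularization factors equal one, so the sum reduces to a sum of arclengths of $\Pi_h\psi$ which converges to $\int_\Sigma\sqrt{1+\psi'^2}\,dx = A[v]/\gamma$ by standard interpolation. The second class contributes zero by the support property of $\zeta_\delta$. Combined with the upper bound obtained by applying (i) to $\Pi_h v \to v$ in $H^1_\pi$, the proof reduces to showing that the transition class is negligible.

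The main obstacle is controlling this transition class in (ii) without structural assumptions on $\Sigma$, which in general need not be a finite union of intervals even when $v\in X^2$. The strategy is to exploit the $C^1$ regularity of $v-\psi$: at points close to $\Sigma$ where $0 < v-\psi < \delta$, the derivative of $v-\psi$ must be small, so by an elementary calculus argument the $\delta$-sublevel set of $v-\psi$ outside $\Sigma$ has total measure $o(1)$ as $\delta\downarrow 0$. Provided $\delta$ is coupled with $h$ (e.g.\ $\delta \le c_0 h$, as in Lemma~\ref{lem:boundedness}), the mesh resolves the boundary of $\Sigma$ sufficiently well that the transition contribution vanishes in the limit, closing the argument.
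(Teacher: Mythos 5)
Your argument for part (i) is correct and is essentially the paper's argument in a different costume: you bound $A_{h,\delta}[v_{h,\delta}]$ by the arclength of $v_{h,\delta}$ over a shrinking neighbourhood $N_\eta$ of the contact set $\Sigma=\{v=\psi\}$ and pass to the limit first in $h,\delta$ and then in $\eta$, using $\bigcap_{\eta>0}N_\eta=\Sigma$ (valid since $v\ge\psi$); the paper does the same localization with characteristic functions and Fatou's lemma. The containment $E_{h,\delta}\subset N_\eta$ via uniform convergence is fine.

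The gap is in part (ii), and it sits exactly where you suspected. Your first class consists of intervals $I_j$ with \emph{both} endpoints in $\Sigma$; the union $F_h$ of these intervals eventually covers $\operatorname{Int}\Sigma$, but it need not cover $\Sigma\setminus\operatorname{Int}\Sigma$, and for $v-\psi\in H^2$ with $v\ge\psi$ the contact set can have topological boundary of positive measure (a fat-Cantor-set contact set is realizable with $v-\psi\in H^2$, $v-\psi\ge0$). Your class-1 sum therefore only guarantees $\liminf_h A_{h,\delta}[\Pi_h v]\ge\gamma\int_{\operatorname{Int}\Sigma}\sqrt{1+\psi'^2}\,dx$, which may be strictly below $A[v]$. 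The transition-class argument does not repair this: the bound $|\{0<v-\psi<\delta\}|=o(1)$ (which is immediate from continuity of measure, no calculus needed) controls the \emph{excess} arclength picked up outside $\Sigma$ --- the direction already settled by (i) --- but says nothing about the \emph{deficit} over $\Sigma\setminus F_h$, which is what matters for the remaining lower bound. To recover that mass you would need the weights $\zeta_{\delta,j-1}\zeta_{\delta,j}$ to tend to $1$ on \emph{every} interval meeting $\Sigma$; since $(v-\psi)'=0$ on $\Sigma$, Taylor expansion gives $0\le(v-\psi)(x_j)\le Ch^{3/2}\|(v-\psi)''\|_{L^2}$ at the endpoints of such intervals, so this works precisely when $h^{3/2}/\delta\to0$, i.e.\ under a \emph{lower} bound on $\delta$ in terms of $h$. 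The coupling $\delta\le c_0h$ that you invoke is an upper bound and points the wrong way. (For comparison, the paper's own proof of (ii) establishes $\chi_{h,\delta}\to\chi$ only on $\operatorname{Int}\Sigma\cup\{v>\psi\}$ and then asserts almost-everywhere convergence from continuity of $v-\psi$, which tacitly assumes the boundary of the contact set is null; you have correctly located the hard point, but your proposed resolution does not close it.)
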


\begin{proof}
We first show (i).
Let $v \in X^2$ and $v_{h,\delta} \in V_h$ with $v_{h,\delta} \to v$ in $H^1_\pi$.
Set
\begin{equation}
\chi(x) = \begin{cases}
1, & \text{if } v(x) = \psi(x), \\
0, & \text{otherwise},
\end{cases}
\qquad
\chi_{h,\delta}(x) = \sum_{j=1}^{N} \zeta_{\delta,j-1} \zeta_{\delta,j} \chi_{I_j}(x),
\end{equation}
where $\chi_{I_j}$ is the characteristic function of $I_j$.
Then,
\begin{equation}
-A_{h,\delta}[v_{h,\delta}] + A[v]
= J_1 + J_2,
\label{eq:A_h,delta}
\end{equation}
where
\begin{equation}
J_1
= - \int_0^1 \chi_{h,\delta} \left( \sqrt{1+|v'_{h,\delta}|^2} - \sqrt{1+|v'|^2} \right) dx,
\quad
J_2
= - \int_0^1 (\chi_{h,\delta} - \chi) \sqrt{1+|v'|^2} dx.
\end{equation}
Noting that $|\chi_{h,\delta}| \le 1$ almost everywhere, we can bound $J_1$ as
\begin{equation}
|J_1| \le \int_0^1 |v'_{h,\delta} - v'| dx
\le \| v_{h,\delta} - v\|_{H^1}
\end{equation}
owing to \eqref{eq:sqrt}, which implies
\begin{equation}
\lim_{h \downarrow 0} J_1 = 0.
\label{eq:J1}
\end{equation}
Next, we show
\begin{equation}
\liminf_{h \downarrow 0} J_2 \ge 0.
\label{eq:J2}
\end{equation}
Assume that $x \in \{ v=\psi \}$.
It is clear that $\chi(x) - \chi_{h,\delta}(x) \ge 0$.
On the other hand, if $x \in \{ v>\psi \}$, then we can show that
\begin{equation}
\lim_{h \downarrow 0} (\chi(x) - \chi_{h,\delta}(x)) = 0
\label{eq:chi}
\end{equation}
since $v_{h,\delta} \to v$ in $H^1_\pi$ and $H^1(0,1)$ is continuously embedded in $C^0[0,1]$.
Thus, Fatou's lemma yields the estimate \eqref{eq:J2}.
The assertion \eqref{eq:lsc-A} is a consequence of \eqref{eq:A_h,delta}, \eqref{eq:J1}, and \eqref{eq:J2}.

Let us prove the assertion (ii).
Since $\Pi_h v \to v$ in $H^1_\pi$ for $v \in X^2$, it suffices to show
\begin{equation}
\lim_{h \downarrow 0} J_2 = 0
\label{eq:J2-2}
\end{equation}
for $v_{h,\delta} = \Pi_h v$.
We prove that \eqref{eq:chi} holds for $x \in \operatorname{Int}\{ u=\psi \}$.
Let $x \in \operatorname{Int}\{ u=\psi \}$.
For each sufficiently small $h$, we can find an index $j$ such that $x \in I_j \in \operatorname{Int}\{ u=\psi \}$.
For such $j$, we have $(u-\psi)(x_{j-1}) = (u-\psi)(x_j) = 0$, which implies $\chi_{h,\delta}(x) = \chi(x) = 1$.
Thus, we have \eqref{eq:chi}.
Noting that $u - \psi$ is continuous in $[0,1]$, we can obtain \eqref{eq:chi} for almost all $x \in (0,1)$, which yields \eqref{eq:J2-2}.
\end{proof}

\begin{lem}\label{lem:penalty}
Define a functional $P$ on $H^1_\pi$ as
\begin{equation}
P[v] = \begin{cases}
0, & v \in X^1, \\ +\infty, & \text{otherwise}.
\end{cases}
\end{equation}
Then, the following two assertions hold.
\begin{enumerate}
\item For $v \in H^1_\pi$,
\begin{equation}
\limsup_{h, \rho \downarrow 0} P_{h,\rho}[\Pi_h v] \le P[v].
\end{equation}
\item For $v \in H^1_\pi$ and for $v_{h,\rho} \in H^1_\pi$ with $v_{h,\rho} \to v$ in $H^1_\pi$,
\begin{equation}
\liminf_{h, \rho \downarrow 0} P_{h,\rho}[v_{h,\rho}] \ge P[v].
\end{equation}
\end{enumerate}
\end{lem}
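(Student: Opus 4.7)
The plan is to prove each of the two assertions by splitting on whether $v \in X^1$ or not, with the case $v \notin X^1$ providing the only nontrivial content (when $P[v] = +\infty$).

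For assertion (i), I would argue as follows. If $v \in X^1$, then at every grid point $(\Pi_h v)(x_j) = v(x_j) \ge \psi(x_j) = \psi_j$, so the negative part $(v_j - \psi_j)_-$ vanishes for all $j$, and therefore $P_{h,\rho}[\Pi_h v] = 0 = P[v]$ already for every $h,\rho$. If $v \notin X^1$, then $P[v] = +\infty$ and the inequality is trivial. So assertion (i) requires only the observation that Lagrange interpolation preserves the constraint $v \ge \psi$ at the nodes.

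For assertion (ii), the subcase $v \in X^1$ is trivial since every summand in $P_{h,\rho}$ is nonnegative and $P[v] = 0$. The work is concentrated in the subcase $v \notin X^1$, where I must show that the discrete penalty blows up along every approximating sequence. First I would invoke the continuous embedding $H^1_\pi \hookrightarrow C^0[0,1]$: the function $v - \psi$ is continuous, and since $v \notin X^1$ there is a point $x_0 \in (0,1)$ with $(v-\psi)(x_0) < 0$. Choose $\varepsilon > 0$ and an interval $(a,b) \subset (0,1)$ on which $v - \psi < -\varepsilon$. By the same embedding, $v_{h,\rho} \to v$ uniformly, hence for all sufficiently small $h$ and $\rho$ we have $v_{h,\rho} - \psi < -\varepsilon/2$ on $(a,b)$, and in particular $((v_{h,\rho}(x_j) - \psi_j)_-)^2 \ge \varepsilon^2/4$ at every node $x_j \in (a,b)$. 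The number of such nodes is at least of order $(b-a)/h$, so the factor of $h$ in the Riemann sum cancels and
\begin{equation}
P_{h,\rho}[v_{h,\rho}] \;\ge\; \frac{1}{\rho} \cdot \frac{(b-a)}{2h} \cdot \frac{\varepsilon^2}{4} \cdot h
\;=\; \frac{(b-a)\varepsilon^2}{8\rho} \;\longrightarrow\; +\infty
\end{equation}
as $\rho \downarrow 0$, giving $\liminf P_{h,\rho}[v_{h,\rho}] = +\infty = P[v]$.

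The only mildly delicate step is the bookkeeping in the last display: one must check that the discrete penalty retains its $1/\rho$ blow-up after the $h$ inside the Riemann sum is absorbed by the node count. This is really what makes the penalization work in the limit, and it is the single step I would flag as worth spelling out; everything else reduces to pointwise evaluation at the nodes and continuous dependence via $H^1 \hookrightarrow C^0$.
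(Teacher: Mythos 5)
Your proof is correct and follows essentially the same route as the paper: both reduce to the case $v \notin X^1$, use the embedding $H^1(0,1) \hookrightarrow C^0[0,1]$ to get a set of positive measure where $v_{h,\rho} - \psi$ stays below a fixed negative constant, and then observe that the node count cancels the factor $h$ so that the $1/\rho$ blow-up survives. (Your choice of a single interval $(a,b)$ is in fact slightly cleaner than the paper's general sublevel set $J_d$, for which the node-count bound $(|J_d|-2h)/h$ needs $J_d$ to be essentially an interval, but the idea is identical.)
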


\begin{proof}
The assertion (i) is obvious since
\begin{align}
v \in X^1 & \implies P_{h,\rho}[\Pi_h v] = P[v] = 0, \\
v \not\in X^1 & \implies P_{h,\rho}[\Pi_h v] \le +\infty = P[v].
\end{align}
We show (ii).
We can assume $v \not\in X^1$ and $v_{h,\rho} \in V_h$. Let $d := \sup_{x \in [0,1]} (\psi(x) - v(x))$, and
\begin{equation}
J_d := \{ x \in [0,1] \mid v(x) + d/2 < \psi(x) \}.
\end{equation}
Note that $d>0$ and $|J_d| > 0$.
Since $v_{h,\delta} \in V_h$ in $H^1_\pi$ and  $H^1(0,1)$ is continuously embedded in $C^0[0,1]$, we can show that
\begin{equation}
J_d \subset \{ x \in [0,1] \mid v_{h,\delta}(x) + d/4 < \psi(x) \}
\end{equation}
for sufficiently small $h$ and $\rho$.
Thus,
\begin{equation}
P_{h,\rho}[v_{h,\rho}]
\ge \frac{1}{\rho} \sum_{x_j \in J_d} |\psi(x_j) - v_{h,\rho}(x_j)|^2 h
\ge \frac{1}{\rho} \sum_{x_j \in J_d} \left( \frac{d}{4} \right)^2 h
\ge \frac{1}{\rho} \frac{d^2}{16} (|J_d| - 2h)
\to +\infty
\end{equation}
as $h, \rho \downarrow 0$, which implies the assertion.
\end{proof}

\subsection{Bending energy}
The main difficulty in the proof of Theorem \ref{thm:gamma} is the estimate of the bending energy $B$.
In this subsection, we establish the two inequalities (U) and (L) for $B$.
We redefine $B$ as a functional on $H^1_\pi$ as follows:
\begin{equation}
B[v] = \begin{dcases}
\frac{C}{2} \int_0^1 \frac{|v''|^2}{(1+|v'|^2)^{2/5}} dx, & v \in H^2_\pi, \\
+ \infty , & H^1_\pi \setminus H^2_\pi.
\end{dcases}
\end{equation}
Moreover, we introduce an auxiliary functional $\tilde{B}_{h}$ by
\begin{equation}
\tilde{B}_{h}[v] := \begin{dcases}
\frac{C}{2} \sum_{j=1}^N \frac{|D_{h,j}|^2}{(1+|d_{h,j}v|^2)^{5/2}}h, & v \in V_h, \\
+ \infty, & v \in H^1_\pi \setminus V_h.
\end{dcases}
\end{equation}

\begin{lem}\label{lem:bending-1}
For every $v \in H^1_\pi$,
\begin{equation}
\lim_{h \downarrow 0} \tilde{B}_h[\Pi_h v] = B[v].
\label{eq:bending-1}
\end{equation}
\end{lem}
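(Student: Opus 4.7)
The plan is to split by whether $v \in H^2_\pi$ (so $B[v]<\infty$) or $v \in H^1_\pi \setminus H^2_\pi$ (so $B[v]=+\infty$). In the first case, since $\Pi_h v$ is the piecewise linear nodal interpolant, $d_{h,j}(\Pi_h v) = d_{h,j} v$ and $D_{h,j}(\Pi_h v) = D_{h,j} v$, and I would rewrite the sum as a Lebesgue integral using the piecewise constant functions $\mathcal{D}_h$ and $\mathcal{G}_h$ defined by $\mathcal{D}_h|_{I_j} = d_{h,j}v$ and $\mathcal{G}_h|_{I_j} = D_{h,j}v$, giving $\tilde B_h[\Pi_h v] = \frac{C}{2}\int_0^1 \phi(\mathcal{D}_h)\,\mathcal{G}_h^2\,dx$ with $\phi(s) := (1+s^2)^{-5/2}$. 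Similarly $B[v] = \frac{C}{2}\int_0^1 \phi(v')(v'')^2\,dx$.

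The three ingredients I would use are: (a) uniform convergence $\mathcal{D}_h \to v'$, directly from Lemma~\ref{lem:fdm}(ii) since $\|\mathcal{D}_h - v'\|_{L^\infty} \le \sqrt{2/3}\, h^{1/2}\|v''\|_{L^2}$; (b) strong $L^2$ convergence $\mathcal{G}_h \to v''$, which I would prove by combining the uniform operator bound $\|\mathcal{G}_h\|_{L^2} \le (2/\sqrt{3})\|v''\|_{L^2}$ from Lemma~\ref{lem:fdm}(iii) with density---on $v \in C^2_\pi$, Taylor expansion gives $\mathcal{G}_h \to v''$ uniformly (hence in $L^2$), and an $\varepsilon$-triangle inequality approximating a general $v \in H^2_\pi$ by smooth functions promotes this to $H^2_\pi$; (c) bounded Lipschitz continuity of $\phi$ on $\bR$, since $\phi'(s)=-5s(1+s^2)^{-7/2}$ is globally bounded, which yields $\phi(\mathcal{D}_h) \to \phi(v')$ uniformly together with $0 \le \phi \le 1$. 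With these in hand, I would split
\[
\tilde B_h[\Pi_h v] - B[v] = \frac{C}{2}\int_0^1 \phi(\mathcal{D}_h)\,(\mathcal{G}_h^2 - (v'')^2)\,dx + \frac{C}{2}\int_0^1 (\phi(\mathcal{D}_h) - \phi(v'))\,(v'')^2\,dx.
\]
The first integral is bounded by $\frac{C}{2}\|\mathcal{G}_h - v''\|_{L^2}\|\mathcal{G}_h + v''\|_{L^2}$, which tends to $0$ by (b); the second tends to $0$ by dominated convergence, using the uniform convergence from (a)--(c) and the $L^1$ majorant $2(v'')^2$.

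For the second case $v \in H^1_\pi \setminus H^2_\pi$, I must show $\tilde B_h[\Pi_h v] \to +\infty$, and I would argue by contradiction: assume a subsequence along which $\tilde B_h[\Pi_h v] \le M$. The main obstacle here is that the denominator $(1+d_{h,j}v^2)^{5/2}$ can absorb arbitrarily large $|D_{h,j}v|^2$ whenever the discrete slopes blow up, so without a slope bound on $v$ the contradiction is not immediate. In the setting in which this lemma is actually invoked downstream---namely the $\Gamma$-convergence proof on $X_S$, where $|d_{h,j}v| \le \|v'\|_{L^\infty} \le S$---this obstacle disappears: one has the uniform lower bound $\phi(\mathcal{D}_h) \ge (1+S^2)^{-5/2}$, so boundedness of $\tilde B_h[\Pi_h v]$ forces a uniform $L^2$ bound on $\mathcal{G}_h$, and testing $\mathcal{G}_h$ against any $\varphi \in C^\infty_\pi$ and applying discrete summation by parts identifies any weak $L^2$ cluster point of $\mathcal{G}_h$ with the distributional $v''$. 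This yields $v \in H^2_\pi$, the desired contradiction.
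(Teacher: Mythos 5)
Your treatment of the main case $v\in H^2_\pi$ is correct and follows essentially the same route as the paper: the two-term split into a ``second-difference'' error and a ``weight'' error is exactly the paper's $B_1+B_2$ decomposition, and both arguments rest on Lemma~\ref{lem:fdm} plus the global Lipschitz bound for $(1+s^2)^{-5/2}$. The only organizational difference is where the density argument lives: the paper first proves the limit for $C^\infty$ data via $L^\infty$ bounds on $v^{(3)},v^{(4)}$ and then transfers to $H^2_\pi$ through the four auxiliary quantities $B_3^n,\dots,B_6^n$, whereas you perform density once, at the level of the piecewise-constant second difference $\mathcal{G}_h\to v''$ in $L^2$ (using the uniform operator bound \eqref{eq:lem-fdm-3}), and then conclude in one step; this is slightly cleaner but not a genuinely different method. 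Your closing discussion of $v\in H^1_\pi\setminus H^2_\pi$ flags a real issue: the lemma as stated would require $\tilde B_h[\Pi_h v]\to+\infty$ there, and, as you observe, the vanishing weight $(1+|d_{h,j}v|^2)^{-5/2}$ blocks a direct argument without a slope bound. The paper simply writes ``we can assume $v\in H^2_\pi$'' and never proves this case; it is harmless only because the lemma is invoked solely for the $\limsup$ inequality of condition (U), which is vacuous when $B[v]=+\infty$. Your partial repair under the $X_S$ slope bound is consistent with how the paper handles the analogous degenerate cases in Lemma~\ref{lem:bending-4}.
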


\begin{proof}
We can assume $v \in H^2_\pi$.
We first show that \eqref{eq:bending-1} holds for $v \in C^\infty[0,1]$.
Let $v \in C^\infty[0,1]$.
Then,
\begin{equation}
\tilde{B}_h[\Pi_h v] - B[v] = B_1 + B_2,
\end{equation}
where
\begin{equation}
B_1 = \sum_{j=1}^N \int_{I_j} \frac{|D_{h,j}v|^2 - |v''|^2}{(1+|d_{h,j}v|^2)^{5/2}} dx, \quad
B_2 = \sum_{j=1}^N \int_{I_j} |v''|^2 \left[ \frac{1}{(1+|d_{h,j}v|^2)^{5/2}} - \frac{1}{(1+|v'|^2)^{5/2}} \right] dx.
\end{equation}
By the Taylor expansion and \eqref{eq:D_j},
\begin{align}
|D_{h,j}v - v''(x)|
&\le |D_{h,j}v - v''(x_j)| + |v''(x_j) - v''(x)|
\le \frac{h^2}{12} \| v^{(4)} \|_{L^\infty(0,1)} + h \| v^{(3)} \|_{L^\infty(0,1)}, \\
|D_{h,j}v|
& \le \frac{1}{h^2} \| \psi \|_{L^1(x_{j-1},x_{j+1})} \| v'' \|_{L^\infty(0,1)}
= \| v'' \|_{L^\infty(0,1)}
\end{align}
Thus,
\begin{align}
|B_1|
&\le \sum_{j=1}^N \int_{I_j} |D_{h,j}v + v''| |D_{h,j}v - v'' | dx \\
&\le \sum_{j=1}^N 2 \| v'' \|_{L^\infty(0,1)}
\left( \frac{h^2}{12} \| v^{(4)} \|_{L^\infty(0,1)} + h \| v^{(3)} \|_{L^\infty(0,1)} \right) h \\
&\to 0
\end{align}
as $h \downarrow 0$.
Noting that the function $(1+x^2)^{-5/2}$ is Lipschitz continuous with the estimate
\begin{equation}
| (1+s^2)^{-5/2} - (1+t^2)^{-5/2} | \le 5|s-t|,
\label{eq:sqrt-5/2}
\end{equation}
we have
\begin{align}
|B_2| &\le 5 \sum_{j=1}^N \int_{I_j} |v''|^2 |d_{h,j}v - v'| dx \\
& \le 5 \sum_{j=1}^N \int_{I_j} \|v''\|_{L^\infty(0,1)}^2 h \|v''\|_{L^\infty(0,1)} dx \\
& = 5h\|v''\|_{L^\infty(0,1)}^3 \\
& \to 0
\end{align}
as $h \downarrow 0$, as a result of Lemma \ref{lem:fdm}.
Therefore, we obtain \eqref{eq:bending-1} for $v \in C^\infty[0,1]$.

Now, we establish our assertion.
Let $v \in H^2_\pi$.
Then, there exists $v_n \in C^\infty[0,1]$ satisfying $v_n \to v$ in $H^2(0,1)$.
We write
\begin{equation}
d_j := d_j v, \quad d_{j,n}:=d_{h,j}v_n, \quad D_j := D_{h,j}v, \quad D_{j,n} := D_{h,j}v_n.
\end{equation}
Then,
\begin{equation}
\tilde{B}_h[\Pi_h v] - \tilde{B}_h[\Pi_h v_n] = B_3^n + B_4^n,
\quad
B[v_n] - B[v] = B_5^n + B_6^n,
\end{equation}
where
\begin{gather}
B_3^n = \sum_{j=1}^N \frac{D_j^2 - D_{j,n}^2}{(1+d_{j,n}^2)^{5/2}} h, \quad
B_4^n = \sum_{j=1}^N D_j^2 \left[ \frac{1}{(1+d_j^2)^{5/2}} - \frac{1}{(1+d_{j,n}^2)^{5/2}} \right] h, \\
B_5^n = \int_0^1 \frac{|v''_n|^2 - |v''|^2}{(1+|v'_n|^2)^{5/2}} dx, \quad
B_6^n = \int_0^1 |v''|^2 \left[ \frac{1}{(1+|v'_n|^2)^{5/2}} - \frac{1}{(1+|v'|^2)^{5/2}} \right] dx.
\end{gather}
As a result of Lemma \ref{lem:fdm}, \eqref{eq:sqrt-5/2}, the H\"oder inequality, and the Sobolev inequality, we have
\begin{align}
|B_3^n| &\le \sum_{j=1}^N |D_j + D_{j,n}| |D_j - D_{j,n}| h
\le C \| v'' + v''_n \|_{L^{2}(0,1)} \| v'' - v''_n \|_{L^2(0,1)}
\to 0 \\
|B_4^n| &\le 5 \sum_{j=1}^N |d_j - d_{j,n}| |D_j|^2 h
\le C \| v' - v'_n \|_{L^\infty(0,1)} \|v''\|_{L^2(0,1)} \to 0 \\
|B_5^n| &\le C \| v'' + v''_n \|_{L^{2}(0,1)} \| v'' - v''_n \|_{L^2(0,1)}
\to 0 \\
|B_6^n| &\le C \| v' - v'_n \|_{L^\infty(0,1)} \|v''\|_{L^2(0,1)} \to 0
\end{align}
as $n \to \infty$, where $C>0$ is independent of $h$, $v$, and $v_n$.
These estimates yield
\begin{equation}
\Big| \tilde{B}_h[\Pi_h v] - \tilde{B}_h[\Pi_h v_n] \Big| \to 0
,\quad \Big| B[v_n] - B[v] \Big| \to 0
\end{equation}
as $n \to \infty$.
Since
\begin{equation}
\Big| \tilde{B}_h[\Pi_h v] - B[v] \Big|
\le \Big| \tilde{B}_h[\Pi_h v] - \tilde{B}_h[\Pi_h v_n] \Big|
+ \Big| \tilde{B}_h[\Pi_h v_n] - B[v_n] \Big|
+ \Big| B[v_n] - B[v] \Big|
\end{equation}
and $v_n \in C^\infty[0,1]$, we can complete the proof.
\end{proof}

Now, we can establish condition (U) for $B$.
\begin{lem}\label{lem:bending-2}
For every $v \in H^1_\pi$,
\begin{equation}
\limsup_{h\downarrow 0} B_h[\Pi_h v] \le B[v].
\end{equation}
\end{lem}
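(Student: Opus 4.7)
Since $B[v] = +\infty$ for $v \in H^1_\pi \setminus H^2_\pi$, only $v \in H^2_\pi$ needs attention. For such $v$, I would establish $\lim_{h \downarrow 0}\bigl(B_h[\Pi_h v] - \tilde B_h[\Pi_h v]\bigr) = 0$; combined with Lemma \ref{lem:bending-1}, this yields $B_h[\Pi_h v] \to B[v]$ and in particular the desired $\limsup$ inequality.

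The proof rests on an exact algebraic comparison between the discrete integrands of $B_h$ and $\tilde B_h$. Setting $\alpha_j := \sqrt{1 + (d_{h,j}v)^2}$ and $D_j := D_{h,j}v$, one computes from $\cos\theta_j = (1 + d_j d_{j+1})/(\alpha_j \alpha_{j+1})$ the identity
\begin{equation}
\sin^2\theta_j = \frac{h^2 D_j^2}{\alpha_j^2 \alpha_{j+1}^2},
\end{equation}
and, using $l_j = h\alpha_j/2$, the factorization
\begin{equation}
\frac{l_j^3 + l_{j+1}^3}{l_j l_{j+1}(l_j + l_{j+1})^2}
= \frac{2(\alpha_j^2 - \alpha_j\alpha_{j+1} + \alpha_{j+1}^2)}{h\, \alpha_j \alpha_{j+1}(\alpha_j + \alpha_{j+1})}.
\end{equation}
Coupled with the Taylor bound $\theta^2 = \sin^2\theta\bigl(1 + O(\sin^2\theta)\bigr)$ for small $\theta$, these identities split each summand of $B_h[\Pi_h v]$ into the matching summand of $\tilde B_h[\Pi_h v]$ (namely $\frac{C}{2}\frac{h D_j^2}{\alpha_j^5}$, which one recovers when $\alpha_j = \alpha_{j+1}$) plus a pointwise error driven by $|\alpha_{j+1} - \alpha_j|$ and by $\sin^2\theta_j$.

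First I would handle the smooth case $v \in C^\infty[0,1]$. There $|\alpha_{j+1} - \alpha_j| \le |d_{j+1} - d_j| = h|D_j| = O(h)$ and $\sin^2 \theta_j = O(h^2)$, so the pointwise error is $O(h)$ times the discrete integrand of $\tilde B_h$; summing over $N = 1/h$ terms yields a total error of $O(h)$, and Lemma \ref{lem:bending-1} finishes the smooth case. For general $v \in H^2_\pi$, I would mimic the density argument at the end of the proof of Lemma \ref{lem:bending-1}: choose $v_n \in C^\infty[0,1]$ with $v_n \to v$ in $H^2$ and conclude via the triangle inequality
\begin{equation}
\bigl| B_h[\Pi_h v] - B[v] \bigr|
\le \bigl| B_h[\Pi_h v] - B_h[\Pi_h v_n] \bigr|
+ \bigl| B_h[\Pi_h v_n] - B[v_n] \bigr|
+ \bigl| B[v_n] - B[v] \bigr|,
\end{equation}
taking $h \downarrow 0$ and then $n \to \infty$.

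The main obstacle is the uniform-in-$h$ stability $|B_h[\Pi_h v] - B_h[\Pi_h v_n]| \to 0$ as $n \to \infty$: because $B_h$ is written in terms of $\theta_j = \arccos(\cdots)$, a direct Lipschitz estimate in $d_j, d_{j+1}$ is awkward. The plan is to first rewrite the summand of $B_h$ through the two identities above, so that it becomes a smooth rational function of $d_j, d_{j+1}, D_j$ multiplied by $h$; after this reformulation the stability follows from Lemma \ref{lem:fdm}, the Lipschitz bound \eqref{eq:sqrt-5/2}, and the embedding $H^1 \hookrightarrow C^0$, in the same manner as in the second half of the proof of Lemma \ref{lem:bending-1}.
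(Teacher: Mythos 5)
Your overall reduction is exactly the paper's: show $B_h[\Pi_h v]-\tilde B_h[\Pi_h v]\to 0$ and invoke Lemma \ref{lem:bending-1}, and your two algebraic identities (for $\sin^2\theta_j$ and for the $l_j$-ratio) are correct and are, up to using $\sin\theta_j$ in place of $\tan\theta_j$, the same computation that produces the paper's decomposition into $B_1$ (the $1-(\theta_j/\tan\theta_j)^2$ correction) and $B_2$ (the $1-g(d_j,d_{j+1})$ correction). Where you diverge is in how the comparison is pushed from smooth $v$ to general $v\in H^2_\pi$: you propose a density argument, which forces you to prove a stability estimate for $B_h$ itself that is uniform in $h$. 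The paper avoids this entirely by observing that the only quantity that needs to be small is $\max_j|d_{j+1}-d_j|=h|D_{h,j}v|\le C_0\|v''\|_{L^2(x_{j-1},x_{j+1})}$, and this tends to $0$ uniformly in $j$ for \emph{any} $v''\in L^2$ by absolute continuity of the integral — no smoothness of $v$ is used. Once $\max_j|d_{j+1}-d_j|<\varepsilon$ (hence $1+d_jd_{j+1}>1/2$ and $\theta_j<2\varepsilon$), both correction factors are bounded by $C\varepsilon^2$ and $C\varepsilon$ times $\sum_j D_j^2h\le\tfrac43\|v''\|_{L^2}^2$, and one is done in a single step. So the paper's route buys a shorter proof and dispenses with the step you yourself identify as the main obstacle.

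That said, your route is workable, but two points in the deferred stability step need attention. First, after your reformulation the summand of $B_h$ is not purely a rational function of $d_j,d_{j+1},D_j$: it carries the factor $(\theta_j/\sin\theta_j)^2$, which is a smooth function of $\sin^2\theta_j$ only on the range $\theta_j\in[0,\pi/2)$; you must therefore first secure $1+d_jd_{j+1}>0$ (i.e.\ the analogue of \eqref{eq:bending-3}) for all small $h$, \emph{uniformly in $n$} along your approximating sequence $v_n\to v$ in $H^2$ — this follows from the uniform integrability of $\{|v_n''|^2\}$, but it has to be said. Second, the difference of the factors $(\theta_j/\sin\theta_j)^2$ between $v$ and $v_n$ contributes terms of size $h^2D_j^2$ times the main summand; these are controlled by $\max_j h\|v''\|_{L^2(x_{j-1},x_{j+1})}^2\cdot\sum_j hD_j^2$, which is $o(1)$ in $h$ uniformly in $n$, but again this is an estimate you would have to carry out rather than inherit verbatim from the second half of the proof of Lemma \ref{lem:bending-1}. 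Neither point is a fatal gap, but both are exactly the kind of bookkeeping the paper's direct argument lets you skip.
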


\begin{proof}
We can assume $v \in H^2_\pi$.
It suffices to show
\begin{equation}
\lim_{h \downarrow 0} \left( \tilde{B}_h[\Pi_h v] - B_h[\Pi_h v] \right) = 0
\label{eq:bending-5}
\end{equation}
as a result of Lemma \ref{lem:bending-1}.
In the following, we fix small $\varepsilon > 0$ arbitrarily, and we use the same notation as in \eqref{eq:discretize} for $v_h = \Pi_h v$

We first show that there exists $h_\varepsilon > 0$ satisfying
\begin{equation}\max_j | d_{j+1} - d_j| < \varepsilon,
\quad
\theta_h := \max_j | \theta_j | < 2\varepsilon.
\label{eq:bending-2}
\end{equation}
From \eqref{eq:D_j},
\begin{equation}
|d_{j+1} - d_j| = \frac{|D_j|}{h} \le C_0 \| v'' \|_{L^2(x_{j-1},x_{j+1})},
\end{equation}
where $C_0 > 0$ is independent of $j$, $h$, and $v$.
Since $v'' \in L^2(0,1)$, there exists $\delta>0$ such that
\begin{equation}
\| v'' \|_{L^2(K)} < \varepsilon / C_0
\end{equation}
for all measurable sets $K \subset (0,1)$ with $\operatorname{meas}(K) < \delta$.
Thus,
\begin{equation}
|d_{j+1} - d_j| < \varepsilon
\end{equation}
for $h < \delta/2$.
Moreover, one can check that
\begin{equation}
1 + d_j d_{j+1} = \left( \frac{d_{j+1} + d_j}{2} \right)^2 + 1 - \left( \frac{d_{j+1} - d_j}{2} \right)^2 \ge 1 - \frac{\varepsilon^2}{4} > \frac{1}{2}
\label{eq:bending-3}
\end{equation}
for $\varepsilon < \sqrt{2}$, and thus,
\begin{equation}
|\theta_j| = \arctan \frac{|d_{j+1} - d_j|}{1+d_j d_{j+1}} < \arctan 2\varepsilon \le 2\varepsilon.
\end{equation}

Now, we show that
\begin{equation}
\left| \tilde{B}_h[\Pi_h v] - B_h[\Pi_h v] \right|
\le C (\varepsilon^2 + \varepsilon) \|v\|_{H^2(0,1)},
\qquad
\forall h \le h_\varepsilon, \quad
\forall \varepsilon < \sqrt{2}
\label{eq:bending-4}
\end{equation}
for some $C>0$ independent of $\varepsilon$ and $h$, which implies \eqref{eq:bending-5}.
By simple calculation,
\begin{equation}
\tilde{B}_h[\Pi_h v] - B_h[\Pi_h v]
= B_1 + B_2,
\end{equation}
where
\begin{align}
B_1 &= \sum_{j=1}^N
\left[ 1 - \left( \frac{\theta_j}{\tan\theta_j} \right)^2 \right]
\frac{\tan^2 \theta_j}{l_j l_{j+1}}
\frac{l_j^3 + l_{j+1}^3}{(l_j + l_{j+1})^2},\\
B_2 &= \sum_{j=1}^N
\frac{D_j^2 h}{(1+d_j^2)^{5/2}}
\left[ 1 - \frac{2(1+d_j^2)[ (1+d_j^2)^{3/2} + (1+d_{j+1}^2)^{3/2} ]}{(1+d_jd_{j+1})^2 (1+d_{j+1}^2)^{1/2} [ (1+d_j^2)^{1/2} + (1+d_{j+1}^2)^{1/2} ]^2 } \right]
\end{align}
We first estimate $B_1$.
From the Sobolev inequality,
\begin{equation}
|d_j| \le \| v'\|_{L^\infty(0,1)} \le C \|v\|_{H^2(0,1)}
\end{equation}
for some $C>0$.
Thus, equations \eqref{eq:bending-2} and \eqref{eq:bending-3} yield
\begin{gather}
1 - \left( \frac{\theta_j}{\tan\theta_j} \right)^2
\le C \theta_j^2 \le C \varepsilon^2, \\
\frac{\tan^2 \theta_j}{l_j l_{j+1}}
= \frac{4D_j^2}{\sqrt{1+d_j^2}\sqrt{1+d_{j+1}^2}(1+d_jd_{j+1})^2} \le 16 D_j^2, \\
\frac{l_j^3 + l_{j+1}^3}{(l_j + l_{j+1})^2}
= \frac{h [ (1+d_j^2)^{3/2} + (1+d_{j+1}^2)^{3/2} ]}{2(\sqrt{1+d_j^2} + \sqrt{1+d_{j+1}^2})}
\le C(1 + \|v\|_{H^2(0,1)}^2)^{3/2} h,
\end{gather}
which implies
\begin{equation}
|B_1| \le C \varepsilon^2 \sum_{j=1}^N D_j^2 h \le C \varepsilon^2 \| v\|_{H^2(0,1)}^2
\end{equation}
from Lemma \ref{lem:fdm}.
Next, we consider $B_2$.
Let
\begin{equation}
g(s,t) = \frac{2(1+s^2)[(1+s^2)^{3/2} + (1+t^2)^{3/2}]}{(1+st)^2(1+t^2)^{1/2}[(1+s^2)^{1/2} + (1+t^2)^{1/2}]},
\end{equation}
for $s,t \in \bR$.
Then,
\begin{equation}
|1 - g(s,t)| \le C \varepsilon
\end{equation}
if $|s-t| < \varepsilon$ for some $C>0$.
Thus,
\begin{equation}
|B_2| \le \sum_{j=1}^N \frac{D_j^2 h}{(1+d_j^2)^{5/2}}
| 1- g(d_j, d_{j+1})| \le C \varepsilon \| v\|_{H^2(0,1)},
\end{equation}
from Lemma \ref{lem:fdm}.
Hence, we obtain \eqref{eq:bending-4} and can complete the proof.
\end{proof}

Next, we focus on condition (L).
\begin{lem}\label{lem:bending-3}
If $v \in H^2_\pi$ and $v_n \in H^2_\pi$ satisfy $v_n \to v$ in $H^1_\pi$, then
\begin{equation}
B[v] \le \liminf_{n \to \infty} B[v_n].
\end{equation}
\end{lem}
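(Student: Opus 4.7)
If $\liminf_{n\to\infty} B[v_n] = +\infty$, the inequality is trivial, so assume the liminf equals a finite number $L$. The strategy is to localize on sets where $v'_n$ is uniformly bounded, reducing the problem to the classical weak lower semi-continuity of a convex integrand whose coefficient converges uniformly.

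First, pass to a subsequence (still denoted $v_n$) along which $B[v_n] \to L$. Since $v_n \to v$ in $H^1_\pi$, $v'_n \to v'$ in $L^2(0,1)$, so after a further subsequence $v'_n \to v'$ almost everywhere. Because $v \in H^2_\pi \hookrightarrow C^1[0,1]$, the limit $v'$ is bounded, so Egorov's theorem yields, for every $\varepsilon > 0$, a set $E_\varepsilon \subset (0,1)$ with $|(0,1)\setminus E_\varepsilon| < \varepsilon$ on which $v'_n \to v'$ uniformly, hence
\begin{equation*}
\sup_n \| v'_n \|_{L^\infty(E_\varepsilon)} \le R_\varepsilon
\end{equation*}
for some $R_\varepsilon > 0$. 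Combined with $B[v_n] \le L+1$, this gives the $L^2$ estimate
\begin{equation*}
\int_{E_\varepsilon} |v''_n|^2 \, dx \le (1+R_\varepsilon^2)^{5/2} \int_{E_\varepsilon} \frac{|v''_n|^2}{(1+|v'_n|^2)^{5/2}} dx \le (1+R_\varepsilon^2)^{5/2}\, \frac{2(L+1)}{C},
\end{equation*}
so $v''_n$ is bounded in $L^2(E_\varepsilon)$. Extracting a further subsequence (depending on $\varepsilon$), we get $v''_n \rightharpoonup w$ weakly in $L^2(E_\varepsilon)$; since $v_n \to v$ in $H^1$ implies $v''_n \to v''$ in $\mathcal{D}'(0,1)$, we must have $w = v''$ on $E_\varepsilon$.

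On $E_\varepsilon$ the weight $1/(1+|v'_n|^2)^{5/2}$ is uniformly bounded and converges uniformly to $1/(1+|v'|^2)^{5/2}$. Splitting
\begin{equation*}
\int_{E_\varepsilon} \frac{|v''_n|^2}{(1+|v'_n|^2)^{5/2}} dx
= \int_{E_\varepsilon} \frac{|v''_n|^2}{(1+|v'|^2)^{5/2}} dx
+ \int_{E_\varepsilon} |v''_n|^2 \left[\frac{1}{(1+|v'_n|^2)^{5/2}} - \frac{1}{(1+|v'|^2)^{5/2}}\right] dx,
\end{equation*}
the second term tends to $0$ (uniform convergence of the bracket times the bounded $L^1$ norm of $|v''_n|^2$), while the first is weakly lower semi-continuous in $v''_n$ since $w \mapsto \int_{E_\varepsilon} w^2/(1+|v'|^2)^{5/2} dx$ is convex and continuous on $L^2(E_\varepsilon)$. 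Therefore
\begin{equation*}
\int_{E_\varepsilon} \frac{|v''|^2}{(1+|v'|^2)^{5/2}} dx \le \liminf_n \int_{E_\varepsilon} \frac{|v''_n|^2}{(1+|v'_n|^2)^{5/2}} dx \le \frac{2L}{C}.
\end{equation*}
Applying this with $\varepsilon = 1/k$ and noting that $\bigcup_k E_{1/k}$ has full measure, monotone convergence gives $\frac{2}{C} B[v] \le \frac{2L}{C}$, which is the desired inequality.

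\paragraph{Main obstacle.} The principal difficulty is that $H^1$-convergence of $v_n$ does not provide any $L^\infty$ control of $v'_n$, so the weight $(1+|v'_n|^2)^{-5/2}$ may degenerate and the bound on $B[v_n]$ does not translate into a global $L^2$ bound on $v''_n$. Egorov's theorem is the key device for recovering local uniform boundedness of $v'_n$; once that is available on a large subset $E_\varepsilon$, the integrand becomes a convex quadratic form in $v''_n$ with continuous coefficient, and the classical weak lower semi-continuity plus a monotone exhaustion of $(0,1)$ completes the argument.
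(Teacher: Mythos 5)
Your architecture (Egorov's theorem to tame the weight, then weak lower semicontinuity of the frozen-coefficient quadratic form, then exhaustion of $(0,1)$) is sensible, and most individual steps are fine, but the identification of the weak limit --- ``since $v''_n \to v''$ in $\mathcal{D}'(0,1)$, we must have $w = v''$ on $E_\varepsilon$'' --- is a genuine gap, and it sits exactly on top of the obstacle you yourself single out. Distributional convergence on $(0,1)$ does not determine the weak $L^2(E_\varepsilon)$ limit of the restrictions: for $\varphi \in C_c^\infty(0,1)$ you have
\begin{equation*}
\int_{E_\varepsilon} v''_n \varphi \, dx \;=\; \int_0^1 v''_n \varphi \, dx \;-\; \int_{(0,1)\setminus E_\varepsilon} v''_n \varphi \, dx ,
\end{equation*}
and while the first term converges to $\int_0^1 v''\varphi\,dx$, the second is completely uncontrolled: on the exceptional set the weight $(1+|v'_n|^2)^{-5/2}$ may be arbitrarily small, so the bound on $B[v_n]$ gives no $L^1$ (let alone $L^2$) control of $v''_n$ there; a Cauchy--Schwarz against the weight would require a uniform $L^5$ bound on $v'_n$, which you do not have. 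Nor can you integrate by parts component by component, since the Egorov set is only measurable and may contain no interval. So discarding the bad set does not neutralize it: $v''_n$ can transport uncontrolled mass across $(0,1)\setminus E_\varepsilon$ and, a priori, shift the weak limit of its restriction to $E_\varepsilon$.

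The identification is in fact true, but the argument I see for it is precisely the substitution the paper uses, at which point your localization machinery becomes superfluous. Put $G(t) = \int_{-\infty}^t (1+s^2)^{-5/4}\,ds$. Then $|G'|\le 1$, so $G(v'_n)\to G(v')$ in $L^2(0,1)$, while $(G(v'_n))' = v''_n\,(1+|v'_n|^2)^{-5/4}$ satisfies $\|(G(v'_n))'\|_{L^2(0,1)}^2 = \tfrac{2}{C}\, B[v_n]$, which is bounded along your subsequence; hence $(G(v'_n))' \rightharpoonup (G(v'))'$ weakly in $L^2(0,1)$, and weak lower semicontinuity of the norm already yields $B[v] = \tfrac{C}{2}\|(G(v'))'\|_{L^2}^2 \le \liminf_n \tfrac{C}{2}\|(G(v'_n))'\|_{L^2}^2 = \liminf_n B[v_n]$ --- the whole lemma, with no Egorov set, no exhaustion, and no identification problem. (If you insist on your route, multiplying $(G(v'_n))'$ by the factor $(1+|v'_n|^2)^{5/4}$, which is uniformly bounded and uniformly convergent on $E_\varepsilon$, recovers $v''_n \rightharpoonup v''$ weakly in $L^2(E_\varepsilon)$ and closes the gap; but then you are using the substitution anyway.)
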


\begin{proof}
Let $G(t) = \int_{-\infty}^{t} (1+s^2)^{-5/4} ds$.
Then, $G$ satisfies
\begin{equation}
|G(s) - G(t)| \le |s - t|, \quad \forall s,t \in \bR,
\end{equation}
since $|G'(t)| = (1+t^2)^{-5/4} \le 1$.
Thus, $G(v'_n) \to G(v)$ in $L^2(0,1)$.
Since
\begin{equation}
\frac{d}{dx} G(v') = \frac{v''}{(1+|v'|^2)^{4/5}},
\end{equation}
it follows that
\begin{align}
\left| \left( \frac{v''}{(1+|v'|^2)^{4/5}} - \frac{v''_n}{(1+|v'_n|^2)^{4/5}},\ \varphi \right)_{L^2(0,1)} \right|
&= | (G(v') - G(v'_n), \ \varphi')_{L^2(0,1)}| \\
& \le \| (G(v') - G(v'_n) \|_{L^2(0,1)} \| \varphi' \|_{L^2(0,1)} \\
& \to 0
\end{align}
as $n \to \infty$, for every $\varphi \in C^\infty_c(0,1)$.
This yields the desired assertion since $B[v] = \| G(v')\|_{L^2(0,1)}^2$.
\end{proof}

Now, we are ready to obtain condition (L) for $B$.
Note that the following estimate is not uniform with respect to $S$.
The same estimate was obtained in another topology by \cite{BruNR01}.
\begin{lem}\label{lem:bending-4}
Let $S>0$.
For each $v \in X_S$ and $v_h \in X_S$ with $v_h \to v$ in $H^1_\pi$,
\begin{equation}
B[v] \le \liminf_{h \downarrow 0} B_h[v_h].
\label{eq:bending-6}
\end{equation}
\end{lem}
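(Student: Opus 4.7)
My plan is to reduce the assertion to weak lower semicontinuity of the squared $L^{2}$-norm, after constructing a sequence of discrete ``curvatures'' derived from $v_{h}$ that is bounded in $L^{2}$ and whose weak limit is $v''/(1+|v'|^{2})^{5/4}$. I first suppose, without loss of generality, that $\ell := \liminf_{h\downarrow 0} B_{h}[v_{h}]$ is finite and pass to a subsequence with $B_{h}[v_{h}]\to \ell$ and $v_{h}\in V_{h}\cap X_{S}$. Combining \eqref{eq:bending-0} with $l_{j}+l_{j+1}\le h\sqrt{1+S^{2}}$, and using $|\theta_{j}|\ge |\sin\theta_{j}|$ together with the identity $\sin\theta_{j}=(d_{j+1}-d_{j})/[\sqrt{1+d_{j}^{2}}\sqrt{1+d_{j+1}^{2}}]$ and $|d_{j}|\le S$, I obtain
\begin{equation}
\sum_{j}\theta_{j}^{2}\le C(S)\,h\,B_{h}[v_{h}],\qquad \sum_{j} D_{h,j}^{2}\,h \le C(S)\,B_{h}[v_{h}],
\end{equation}
so in particular $\varepsilon_{h}:=\max_{j}|d_{j+1}-d_{j}|\to 0$ and $\max_{j}|\theta_{j}|\to 0$.

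Next I compare $B_{h}$ with the auxiliary functional $\tilde{B}_{h}$ of Lemma~\ref{lem:bending-1}. Rerunning the decomposition $\tilde{B}_{h}[v_{h}]-B_{h}[v_{h}]=B_{1}+B_{2}$ from the proof of Lemma~\ref{lem:bending-2}, with the bound on $\sum D_{h,j}^{2}h$ above replacing the Sobolev estimate employed there, yields
\begin{equation}
\bigl|\tilde{B}_{h}[v_{h}]-B_{h}[v_{h}]\bigr|\le C(S)(\varepsilon_{h}+\varepsilon_{h}^{2})\,B_{h}[v_{h}]=o(1),
\end{equation}
so that $\tilde{B}_{h}[v_{h}]\to \ell$. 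Setting $f_{h}(x):=D_{h,j}v_{h}/(1+d_{j}^{2})^{5/4}$ and $w_{h}(x):=D_{h,j}v_{h}$ for $x\in I_{j}$, both are bounded in $L^{2}(0,1)$, with $\|f_{h}\|_{L^{2}}^{2}=(2/C)\tilde{B}_{h}[v_{h}]$.

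The hard part will be to identify the weak $L^{2}$-limit of $w_{h}$ with $v''$. To this end I introduce the continuous periodic piecewise-linear function $p_{h}$ that takes the value $d_{j}$ at the midpoint $x_{j-1/2}$; then $p_{h}'$ is piecewise constant on the shifted mesh with the same values as $w_{h}$, and an elementary pointwise comparison gives $\|p_{h}-v_{h}'\|_{L^{\infty}}\le \varepsilon_{h}\to 0$. Since $v_{h}'\to v'$ in $L^{2}$, it follows that $p_{h}\to v'$ in $L^{2}$. Passing to a further subsequence with $w_{h}\rightharpoonup w$ weakly in $L^{2}$ (so that $p_{h}'\rightharpoonup w$ as well, the $h/2$ translation being negligible in the limit), the identity $\int p_{h}'\varphi\,dx=-\int p_{h}\varphi'\,dx$ for $\varphi\in C_{c}^{\infty}(0,1)$ passes to the limit and yields $w=v''$ distributionally; hence $v\in H^{2}_{\pi}$. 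Since $(1+d_{j}^{2})^{-5/4}$ is bounded by $1$ and converges in $L^{2}$ to $(1+|v'|^{2})^{-5/4}$ by dominated convergence along a subsequence with $v_{h}'\to v'$ a.e., a standard strong-times-weak argument gives $f_{h}\rightharpoonup v''/(1+|v'|^{2})^{5/4}$ weakly in $L^{2}$. Weak lower semicontinuity of the squared $L^{2}$-norm then produces
\begin{equation}
B[v]=\frac{C}{2}\left\|\frac{v''}{(1+|v'|^{2})^{5/4}}\right\|_{L^{2}}^{2}\le \frac{C}{2}\liminf_{h\downarrow 0}\|f_{h}\|_{L^{2}}^{2}=\liminf_{h\downarrow 0}\tilde{B}_{h}[v_{h}]=\ell,
\end{equation}
as required. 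The non-uniformity in $S$ highlighted in the statement is inherited from the constants $C(S)$ above and, ultimately, from the hypothesis $v_{h}\in X_{S}$ needed to force $\varepsilon_{h}\to 0$.
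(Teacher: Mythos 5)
Your argument is essentially correct but follows a genuinely different route from the paper's. The paper proves the lemma by a case analysis: for $v \in H^2_\pi$ it invokes the piecewise circular-arc reconstruction $\tilde v_h$ of \cite{BruNR01} together with the identity \eqref{eq:bending-7}, shows $\tilde v_h \to v$ in $H^1_\pi$, and then applies Lemma~\ref{lem:bending-3}; the cases $v \in H^2(0,1)\setminus H^2_\pi$ and $v \notin H^2(0,1)$ are treated separately by forcing $B_h[v_h]\to+\infty$ via \eqref{eq:bending-9} and \eqref{eq:bending-10}. You instead compare $B_h$ with the finite-difference functional $\tilde B_h$ directly; this is legitimate because the decomposition $\tilde B_h - B_h = B_1 + B_2$ in the proof of Lemma~\ref{lem:bending-2} is a purely algebraic identity in the nodal values of an arbitrary element of $V_h$, and your a priori bounds $\max_j|\theta_j|\to 0$, $\max_j|d_{j+1}-d_j|\to 0$ and $\sum_j D_{h,j}^2 h \le C(S)B_h[v_h]$ (all correctly derived from \eqref{eq:bending-0} and $|d_j|\le S$) supply exactly what the estimates of that proof require in place of the Sobolev bounds. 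You then identify the weak $L^2$ limit of the second differences with $v''$ through the midpoint interpolant $p_h$ --- which is precisely the operator $I_h v_h'$ of Lemma~\ref{lem:disc-Rellich} --- and finish by weak lower semicontinuity of the squared $L^2$ norm. This buys a unified treatment of all three of the paper's cases (finiteness of the liminf forces $v\in H^2_\pi$) and removes the dependence on the external circular-arc estimate and on Lemma~\ref{lem:bending-3}; the cost is the extra weak-compactness bookkeeping.

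One step needs repair. You test the identity $\int p_h'\varphi = -\int p_h\varphi'$ only against $\varphi\in C_c^\infty(0,1)$, which yields $w=v''$ in the sense of distributions on $(0,1)$ and hence $v\in H^2(0,1)$, but not the periodicity $v'(0)=v'(1)$ that you then assert when you write ``hence $v\in H^2_\pi$.'' This is not cosmetic: by the paper's definition $B[v]=+\infty$ on $H^1_\pi\setminus H^2_\pi$, so your concluding display would be false for $v\in H^2(0,1)\setminus H^2_\pi$ --- this is exactly the paper's Case~2, where the angle $\theta_N$ at the wrap-around vertex stays bounded away from zero. The fix lives entirely inside your framework: your bound on $\sum_j\theta_j^2$ includes the index $j=N$ with the periodic extension of indices, $p_h$ is a continuous \emph{periodic} piecewise-linear function, and integrating by parts on the circle against smooth periodic test functions produces $\int_0^1 w\varphi = -\int_0^1 v'\varphi'$ with no boundary terms, which forces $v'\in H^1_\pi$ and hence $v\in H^2_\pi$. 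With that adjustment (and noting explicitly that the trivial case $\liminf_h B_h[v_h]=+\infty$ needs no argument) the proof is complete.
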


\begin{proof}
Let $v \in X_S$ and $v_h \in X_S$ with $v_h \to v$ in $H^1_\pi$.
In general, we can assume $\sup_h B_h[v_h] < +\infty$, i.e., $v_h \in V_h$.
We consider the following three cases:
\begin{description}
\item[Case 1.] $v \in H^2_\pi$.
\item[Case 2.] $v \in (X_S \cap H^2(0,1)) \setminus H^2_\pi$.
\item[Case 3.] $v \in X_S \setminus H^2(0,1)$.
\end{description}
\begin{figure}[htb]
\centering
\includegraphics{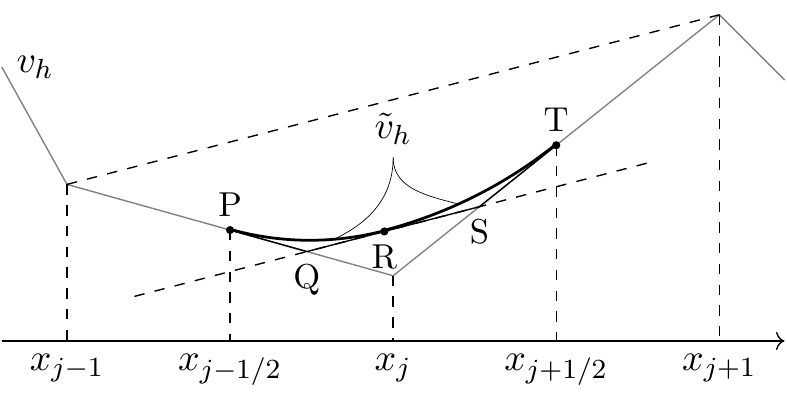}
~~~~~
\includegraphics{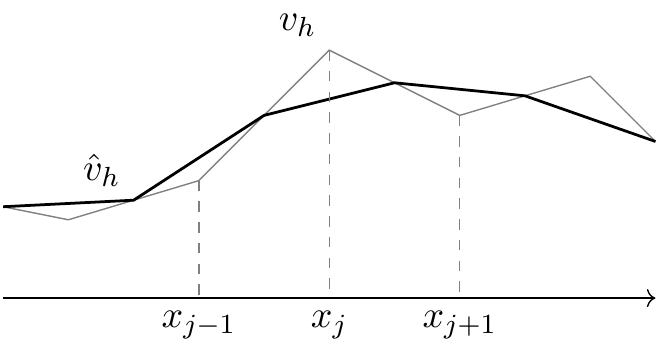}
\caption{Definition of $\tilde{v}_h$ and $\hat{v}_h$.  In the left figure, the segment $\overline{QS}$ is parallel to $\overline{PT}$. Moreover, $|PQ|=|QR|$ and $|RS|=|ST|$, where $|XY|$ is the length of the segment $\overline{XY}$. The bold lines are two circular arcs. The left arc is tangential to the segments $\overline{PQ}$ and $\overline{QR}$ at $P$ and $R$, respectively. The right arc is as well.}
\label{fig:piecewise-arc}
\end{figure}

\textbf{Case 1}. Assume $v \in H^2_\pi$.
Let $\tilde{v}_h \in H^2_\pi$ be the piecewise-circular arc function constructed in Construction 1 of \cite[Section 4]{BruNR01}.
For the reader's convenience, we provide an illustration of $\tilde{v}_h$ in Figure \ref{fig:piecewise-arc}.
Then, it is shown by \cite{BruNR01} that
\begin{equation}
B[\tilde{v}_h] = B_h[v_h](1 + O(\theta_h^2)),
\label{eq:bending-7}
\end{equation}
when $\theta_h := \max_j |\theta_j(v_h)|$ is small.
We show that $v_h - \tilde{v}_h \to 0$ in $H^1_\pi$.
Let $\hat{v}_h \in H^1_\pi$ be the polygonal curve whose vertices are $(x_{j-1/2}, v_{j-1/2})$, $j=1,\dots,N$, where $x_{j-1/2} = (j-1/2)h$ and $v_{j-1/2} = (v_{j-1} + v_j)/2$ (see Figure \ref{fig:piecewise-arc}), and let $J_j = (x_{j-1/2}, x_{j+1/2})$.
Then, from the convexity or concavity of $\tilde{v}_h|_{J_j}$,
\begin{equation}
|v_h - \tilde{v}_h| \le |v_h - \hat{v}_h|
\le \frac{h}{2} \left| \frac{d_{h,j}(v_h)}{2} + \frac{d_{h,j+1}(v_h)}{2} \right|,
\quad \text{on } J_j
\end{equation}
for each $j$.
Thus,
\begin{equation}
\| v_h - \tilde{v}_h \|_{L^2(I_j)}^2
\le \frac{h^2}{8} \|v'_h\|_{L^2(x_{j-1},x_{j+1})}^2
\end{equation}
by Lemma \ref{lem:fdm}, which implies
\begin{equation}
\| v_h - \tilde{v}_h \|_{L^2(0,1)}
\le \frac{h}{2} \|v'_h\|_{L^2(0,1)}
\to 0
\end{equation}
as $h \downarrow 0$.
Again, by the convexity or concavity of $\tilde{v}_h|_{J_j}$,
\begin{equation}
\min\{ d_{h,j}(v_h), d_{h,j+1}(v_h) \}
\le \tilde{v}'_h|_{J_j}
\le \max\{ d_{h,j}(v_h), d_{h,j+1}(v_h) \}.
\end{equation}
Therefore,
\begin{equation}
| v'_h - \tilde{v}'_h |
\le |d_{h,j+1}(v_h) - d_{h,j}(v_h)|
\le |d_{h,j+1}(v_h - v)| + |d_{h,j}(v_h - v)| + h|D_{h,j}(v)|,
\end{equation}
which implies
\begin{equation}
\| v'_h - \tilde{v}'_h \|_{L^2(0,1)}
\le C( \|v' - v'_h \|_{L^2(0,1)} + h\|v''\|_{L^2(0,1)} )
\to 0
\end{equation}
as $h \downarrow 0$, due to \eqref{eq:lem-fdm-1} and \eqref{eq:lem-fdm-3}.
Hence, we obtain $v_h - \tilde{v}_h \to 0$ in $H^1_\pi$.

Now, we are ready to show \eqref{eq:bending-6}.
Noting that
\begin{equation}
l_h := \max_j l_j(v_h) \le \frac{1}{2}(h + \sqrt{h}\|v'_h\|_{L^2(0,1)}) \to 0
\label{eq:bending-8}
\end{equation}
as $h \downarrow 0$ by \eqref{eq:lem-fdm-1} and that
\begin{equation}
B_h[v_h]
\ge \frac{C}{2} \frac{\theta_j^2}{2l_h}
\label{eq:bending-9}
\end{equation}
for each $j$ by \eqref{eq:bending-0}, we can assume $\theta_h \to 0$ as $h \downarrow 0$.
Then,
\begin{equation}
\lim_{h \downarrow 0} \frac{B[\tilde{v}_h]}{B_h[v_h]} \to 1
\end{equation}
from \eqref{eq:bending-7}, and
\begin{equation}
B[v] \le \liminf_{h \downarrow 0} B[\tilde{v}_h]
\end{equation}
from Lemma \ref{lem:bending-3}.
Hence, we can obtain \eqref{eq:bending-6} for $v \in H^2_\pi$.

\textbf{Case 2.} Assume $v \in (X_S \cap H^2(0,1)) \setminus H^2_\pi$.
Then, we can find $\bar{\theta} > 0$ such that
\begin{equation}
|\theta_N| \ge \bar{\theta}
\end{equation}
for every $h$, since $v'(0) \ne v'(1)$.
Thus, \eqref{eq:bending-8} and \eqref{eq:bending-9} implies
\begin{equation}
\liminf_h B_h[v_h] = \infty = B[v].
\label{eq:bending-11}
\end{equation}

\textbf{Case 3.} Assume $v \in X_S \setminus H^2(0,1)$.
Note that $v_h \in X_S$, and thus, $l_j(v_h) \le \frac{h}{2}\sqrt{1+S^2}$.
Since $\arccos t \ge \sqrt{1-t^2}$ for $t \in (-1,1)$,
\begin{equation}
\theta_j^2
\ge \frac{|d_{h,j+1}(v_h) - d_{h,j}(v_h)|^2}{(1+d_{h,j}(v_h)^2)(1+d_{h,j+1}(v_h)^2)}
\ge \frac{h^2 |D_{h,j}(v_h)|^2}{(1+S^2)^2}
\end{equation}
Therefore,
\begin{equation}
B[v_h] \ge \frac{C}{2} \frac{1}{(1+S^2)^{5/2}} \sum_{j=1}^N |D_{h,j}(v_h)|^2 h
\label{eq:bending-10}
\end{equation}
for $v_h \in X_S$.
Since $v_h$ does not converge to an element in $H^2$, we can obtain
\begin{equation}
\lim_{h\downarrow 0} \sum_{j=1}^N |D_{h,j}(v_h)|^2 h = + \infty
\end{equation}
by contradiction (cf. the characterization of $H^1$-functions by shift operators).
Hence, we have \eqref{eq:bending-11}, and thus, the proof is completed.
\end{proof}

\subsection{Completion of the proof of Theorem \ref{thm:gamma}}
Now, we are ready to prove the main theorem of this section.

\begin{proof}[Proof of Theorem \ref{thm:gamma}]
The condition (U) is a consequence of Lemmas \ref{lem:tension}, \ref{lem:adhesion}, \ref{lem:penalty}, and \ref{lem:bending-2}.
We establish the condition (L).
Let $v \in X_S$, $v_{h,\delta,\rho} \in X_S$ and $v_{h,\delta,\rho} \to v$ in $H^1_\pi$.
We can assume $v_{h,\delta,\rho} \in V_h$.
If $v \in X^2$, then condition (L) can be obtained from Lemmas \ref{lem:tension}, \ref{lem:adhesion}, \ref{lem:penalty}, and \ref{lem:bending-4}.
Let $v \in X_S \setminus X^2$.
Then,
\begin{align}
v \not \ge \psi & \implies P_{h,\rho}[v_{h,\delta,\rho}] \to +\infty, \\
v \not\in H^2_\pi & \implies B_h[v_{h,\delta,\rho}] \to +\infty
\end{align}
as $h,\delta,\rho \downarrow 0$, as a result of Lemmas \ref{lem:penalty} and \ref{lem:bending-4}.
Since the estimate \eqref{eq:sufficient-2} implies
\begin{equation}
E_{h,\delta,\rho}[v_{h,\delta,\rho}]
\ge B_h[v_{h,\delta,\rho}] + P_{h,\rho}[v_{h,\delta,\rho}] - 4\gamma (T[\psi]/\sigma + c_0),
\label{eq:bending-12}
\end{equation}
we obtain
\begin{equation}
\lim_{h,\delta,\rho \downarrow 0}E_{h,\delta,\rho}[v_{h,\delta,\rho}]
= +\infty = E[v]
\end{equation}
for $v \in X_S \setminus X^2$.
Hence, we can complete the proof.
\end{proof}

\section{Compactness and convergence results}
\label{section:compactness}
One of our aims is to show that a sequence of discrete minimizers converges to a continuous minimizer.
From this viewpoint, the fundamental theorem of $\Gamma$-convergence (Lemma~\ref{lem:fundamental}) plays an important role.
According to Lemma~\ref{lem:fundamental}, what remains to be shown is that a sequence of minimizers of $E_{h,\delta,\rho}$ has a cluster point.
For this purpose, we show the discrete version of the compact embedding $H^2 \hookrightarrow H^1$.
In what follows, we use the following notation:
\begin{equation}
\| v_h \|_{h,0,p} := \left( \sum_{j=1}^N |v_h(x_j)|^p h \right)^{1/p},
\quad
| v_h |_{h,2,p} := \left( \sum_{j=1}^N |D_{h,j}(v_h)|^p h \right)^{1/p}
\end{equation}
for $v_h \in V_h$ and $p \in [1,\infty)$. Note that $\| \cdot \|_{h,0,p}$ is an equivalent norm to $\| \cdot \|_{L^p(0,1)}$ in $V_h$ (cf.~\cite{FujSS01}).

\begin{lem}[Discrete Rellich-type theorem]
\label{lem:disc-Rellich}
Let $p \in [1,\infty)$.
Assume that the sequence $\{ v_h \mid v_h \in V_h \}_h$ satisfies
\begin{equation}
\| v_h \|_{W^{1,p}(0,1)} + | v_h |_{h,2,p} \le M
\end{equation}
uniformly for $h > 0$.
Then, we can find a subsequence $\{ v_{h'} \}_{h'}$ and $\bar{v} \in W^{1,p}_\pi$ that satisfies
\begin{equation}
v_{h'} \to \bar{v} \quad \text{in } W^{1,p}_\pi
\end{equation}
as $h' \downarrow 0$, where $W^{1,p}_\pi = \{ v \in W^{1,p}(0,1) \mid v(0) = v(1) \}$.
\end{lem}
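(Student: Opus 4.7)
The plan is to promote the bound $|v_h|_{h,2,p} \le M$ on the discrete second differences into compactness in $W^{1,p}_\pi$ by introducing an auxiliary continuous piecewise-linear proxy for the derivative. Concretely, with $x_{j-1/2} := (j - \tfrac{1}{2})h$, I construct $w_h \in W^{1,p}_\pi$ as the periodic continuous function that is affine on each $(x_{j-1/2}, x_{j+1/2})$ and satisfies $w_h(x_{j-1/2}) = d_{h,j}(v_h)$. By construction, $w_h'$ is the constant $D_{h,j}(v_h)$ on $(x_{j-1/2}, x_{j+1/2})$, so $\|w_h'\|_{L^p(0,1)}^p = |v_h|_{h,2,p}^p \le M^p$, while on each segment $w_h$ lies between $d_{h,j}(v_h)$ and $d_{h,j+1}(v_h)$, giving $\|w_h\|_{L^p(0,1)} \le C \|v_h'\|_{L^p(0,1)} \le C M$. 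Thus $\{w_h\}$ is uniformly bounded in $W^{1,p}(0,1)$, and the one-dimensional Rellich--Kondrachov theorem extracts a subsequence $w_{h'} \to \bar w$ strongly in $L^p(0,1)$.

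The second ingredient is that $w_h$ is an order-$h$ approximation of $v_h'$ in $L^p$. Indeed, $v_h'$ equals the constant $d_{h,j}(v_h)$ on $I_j$, whereas on the left half of $I_j$ the function $w_h$ interpolates linearly from $\tfrac{1}{2}(d_{h,j-1}(v_h) + d_{h,j}(v_h))$ at $x_{j-1}$ to $d_{h,j}(v_h)$ at $x_{j-1/2}$, with an analogous description on the right half. A direct pointwise estimate therefore gives
\begin{equation*}
\int_{I_j} |v_h'(x) - w_h(x)|^p\, dx \le C h^{p+1} \bigl( |D_{h,j-1}(v_h)|^p + |D_{h,j}(v_h)|^p \bigr),
\end{equation*}
and summing over $j$ yields $\|v_h' - w_h\|_{L^p(0,1)} \le C h \, |v_h|_{h,2,p} \le C h M \to 0$. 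Combined with the previous step, this shows $v_{h'}' \to \bar w$ strongly in $L^p(0,1)$ along the same subsequence.

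To conclude, I use that $\{v_h\}$ is itself bounded in $W^{1,p}(0,1)$, so another application of the one-dimensional Rellich--Kondrachov theorem, after a further extraction, gives $v_{h'} \to \bar v$ strongly in $L^p(0,1)$. Passing to the limit in $\int_0^1 v_{h'}' \varphi\, dx = -\int_0^1 v_{h'} \varphi'\, dx$ for arbitrary $\varphi \in C^\infty_c(0,1)$ identifies $\bar v \in W^{1,p}(0,1)$ with $\bar v' = \bar w$, so the convergence upgrades to $v_{h'} \to \bar v$ strongly in $W^{1,p}(0,1)$. The periodicity $\bar v(0) = \bar v(1)$ is inherited from $v_{h'}(0) = v_{h'}(1)$ through the continuous embedding $W^{1,p}(0,1) \hookrightarrow C([0,1])$, which holds for every $p \in [1, \infty)$. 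I expect the main obstacle to be the bookkeeping for the auxiliary function $w_h$ --- in particular, defining it as a genuinely periodic object by wrapping the segment $(x_{N-1/2}, x_{1/2}+1)$ around the endpoints, and then turning the pointwise picture into the order-$h$ bound on $v_h' - w_h$ --- after which the conclusion reduces to two applications of the classical Rellich theorem.
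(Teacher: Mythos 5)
Your proposal is correct and follows essentially the same route as the paper: your auxiliary function $w_h$ is exactly the paper's midpoint interpolant $I_h v_h'$ of the discrete derivatives, whose $W^{1,p}$-norm is controlled by $\|v_h\|_{W^{1,p}}+|v_h|_{h,2,p}$, and both arguments conclude via the classical Rellich theorem together with the $O(h)$ bound $\|v_h'-w_h\|_{L^p}\le Ch\,|v_h|_{h,2,p}$. Your additional steps (identifying $\bar v'=\bar w$ distributionally and checking periodicity via the embedding into $C([0,1])$) are details the paper leaves implicit.
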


\begin{proof}
Since $\| v_h \|_{W^{1,p}(0,1)}$ is bounded and the embedding $W^{1,p}(0,1) \hookrightarrow L^p(0,1)$ is compact, we can find a subsequence $\{ v_{h'} \}_{h'}$ and $\bar{v} \in L^p(0,1)$ that satisfies
$v_{h'} \to \bar{v}$ in $L^p(0,1)$ as $h' \downarrow 0$.
We show that $\bar{v} \in W^{1,p}_\pi$ and that there exists a subsequence $\{ v_{h''} \}_{h''} \subset \{ v_{h'} \}_{h'}$ such that $v_{h''} \to \bar{v}'$ in $L^p(0,1)$ as $h'' \downarrow 0$.
\begin{figure}[htb]
\centering
\includegraphics{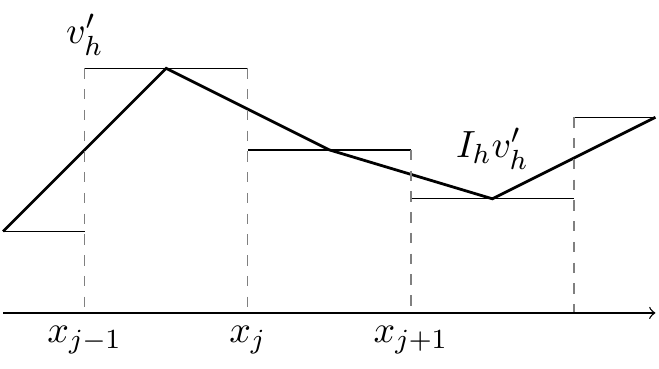}
\caption{Definition of $I_h v'_h$.}
\label{fig:interpolation}
\end{figure}

For $v_h \in V_h$, we construct a ``continuous version'' of $v'_h$ as
\begin{equation}
(I_h v'_h)|_{(x_{j-1/2},x_{j+1/2})}(x)
= d_{h,j}(v_h)\frac{x-x_{j-1/2}}{h} + d_{h,j+1}(v_h)\frac{x_{j+1/2}-x}{h}
\end{equation}
for $j=1,\dots,N$ (see figure \ref{fig:interpolation}).
Note that
\begin{equation}
\| I_h v'_h \|_{h,0,p} = |v_h|_{W^{1,p}(0,1)}, \quad
\| (I_h v'_h)' \|_{L^p(0,1)} = |v_h|_{h,2,p}.
\end{equation}
Thus, there exists $M'>0$ such that $\| I_h v'_h \|_{W^{1,p}(0,1)} \le M'$ for every $h>0$.
Therefore, we can extract a subsequence $\{ v_{h''} \}_{h''} \subset \{ v_{h'} \}_{h'}$ and $\bar{w} \in L^p(0,1)$ that satisfies $I_h v'_{h''} \to \bar{w}$ in $L^p(0,1)$ as $h'' \downarrow 0$.
One can check that
\begin{equation}
\| v'_h - I_h v'_h \|_{L^p(0,1)}
\le (1+p)^{-1/p} |v_h|_{h,2,p}h
\le (1+p)^{-1/p} M h \to 0
\end{equation}
as $h \downarrow 0$.
Thus, $v'_{h''} \to \bar{w}$ in $L^p(0,1)$, which implies $\bar{v} \in W^{1^p}_\pi$ and $v_{h''} \to \bar{v}$ in $W^{1,p}_\pi$.
This is the desired assertion.
\end{proof}

Now, we can establish the convergence result and the (local) optimization of the original problem \eqref{eq:cont-problem-S}.
\begin{thm}\label{thm:compactness-S}
Let $\bar{v}^{(S)}_h = \bar{v}^{(S)}_{h,\delta,\rho} \in V_h$ be a minimizer of the problem \eqref{eq:disc-problem-S}.
Then, for every $S>0$, the sequence $\{ \bar{v}^{(S)}_h \}_h$ has a cluster point $\bar{v}^{(S)} \in \overline{X_S}$, where $\overline{X_S}$ is the closure of $X_S$ in the topology of $H^1_\pi$.

Therefore, the problem \eqref{eq:cont-problem-S} has at least one solution $\bar{v}^{(S)}$, and the sequence $\{ \bar{v}^{(S)}_h \}_h$ has a subsequence that converges to $\bar{v}^{(S)}$ in $H^1_\pi$.
\end{thm}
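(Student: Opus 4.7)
The plan is to apply the fundamental theorem of $\Gamma$-convergence (Lemma~\ref{lem:fundamental}) in combination with the $\Gamma$-convergence already proved in Theorem~\ref{thm:gamma}. What is missing is the compactness of $\{\bar v_h^{(S)}\}_h$ in $H^1_\pi$, and I will obtain it from the discrete Rellich-type theorem (Lemma~\ref{lem:disc-Rellich}) with $p=2$. This reduces everything to establishing, uniformly in $h$, $\delta$, $\rho$, the two bounds
\begin{equation}
\| \bar v_h^{(S)} \|_{W^{1,2}(0,1)} \le C_1, \qquad |\bar v_h^{(S)}|_{h,2,2} \le C_2.
\end{equation}

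The $W^{1,2}$-bound is essentially free: by definition of $X_S$ we have $\|(\bar v_h^{(S)})'\|_{L^\infty} \le S$, and Lemma~\ref{lem:boundedness} (using the standing assumption $\delta \le c_0 h$) supplies the complementary $L^\infty$-bound $\|\bar v_h^{(S)}\|_{L^\infty} \le M$ uniform in every parameter. The bound on $|\bar v_h^{(S)}|_{h,2,2}$ is the heart of the argument, and I will recycle the computation from Case~3 in the proof of Lemma~\ref{lem:bending-4}. Membership in $X_S$ gives $l_j(v_h) \le \tfrac{h}{2}\sqrt{1+S^2}$, and combined with $\arccos t \ge \sqrt{1-t^2}$ on $(-1,1)$ and the lower bound from \eqref{eq:bending-0} one derives
\begin{equation}
B_h[v_h] \ge \frac{C}{2(1+S^2)^{5/2}}\, |v_h|_{h,2,2}^2
\quad \text{for all } v_h \in V_h \cap X_S.
\end{equation}
To bound $B_h[\bar v_h^{(S)}]$ from above, I test against a constant function $c_h \equiv c$ with $c > \|\psi\|_{L^\infty} + \delta$; this $c_h$ lies in $V_h \cap X_S$ and satisfies $E_{h,\delta,\rho}[c_h] = \sigma$, so minimality yields $E_{h,\delta,\rho}[\bar v_h^{(S)}] \le \sigma$. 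Combining with the adhesion estimate \eqref{eq:sufficient-2}, which gives $E_{h,\delta,\rho}[\bar v_h^{(S)}] \ge B_h[\bar v_h^{(S)}] - 4\gamma(T[\psi]/\sigma + c_0)$, one obtains a uniform bound on $B_h[\bar v_h^{(S)}]$, and thereby on $|\bar v_h^{(S)}|_{h,2,2}$.

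With both uniform bounds in hand, Lemma~\ref{lem:disc-Rellich} extracts a subsequence $\{\bar v_{h'}^{(S)}\}_{h'}$ that converges in $H^1_\pi$ to some $\bar v^{(S)}$. The slope and height bounds defining $X_S$ persist under $H^1_\pi$-convergence (via uniform convergence of values and a.e.\ convergence of derivatives along a further subsequence), so $\bar v^{(S)} \in X_S \subset \overline{X_S}$. Finally, Lemma~\ref{lem:fundamental} applied on the metric space $X_S$, together with Theorem~\ref{thm:gamma}, identifies $\bar v^{(S)}$ as a minimizer of $E$ on $X_S$ and certifies the subsequential $H^1_\pi$-convergence, yielding both assertions. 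The main obstacle is clearly the quadratic bound of $|v_h|_{h,2,2}^2$ by $B_h[v_h]$: it is precisely the $W^{1,\infty}$-slope constraint that keeps the angles $\theta_j$ uniformly bounded away from degeneracy, and without it the coercivity needed for Rellich compactness collapses.
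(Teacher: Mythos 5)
Your argument is correct and follows essentially the same route as the paper: comparison with a constant function to get $E_{h,\delta,\rho}[\bar v_h^{(S)}]\le\sigma$, the adhesion bound \eqref{eq:sufficient-2} via \eqref{eq:bending-12} to control the negative term, the coercivity estimate \eqref{eq:bending-10} to bound $|\bar v_h^{(S)}|_{h,2,2}$, the $X_S$-constraint for the $H^1$-bound, and then Lemma~\ref{lem:disc-Rellich} plus Lemma~\ref{lem:fundamental}. The only (harmless) deviation is invoking Lemma~\ref{lem:boundedness} for the $L^\infty$-bound, which already follows directly from $\|\bar v_h^{(S)}\|_{W^{1,\infty}}\le S$.
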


\begin{proof}
In view of Lemma \ref{lem:disc-Rellich}, it suffices to show
\begin{equation}
\| \bar{v}^{(S)}_h \|_{H^1(0,1)} + | \bar{v}^{(S)}_h |_{h,2,2} \le M
\label{eq:boundedness}
\end{equation}
for some $M>0$.
Let $c_h \in V_h$ be a constant function with $c_h \ge \max \psi + \delta$.
Then,
\begin{equation}
E_{h,\delta,\rho}[\bar{v}^{(S)}_{h}] \le E_{h,\delta,\rho}[c_h] = \sigma.
\label{eq:boundedness-1}
\end{equation}
Thus, \eqref{eq:bending-10} and \eqref{eq:bending-12} yield
\begin{equation}
|\bar{v}^{(S)}_h|_{h,2,2}^2 \le \frac{2}{C}(1+S^2)^{5/2}
\left[ \sigma + 4\gamma (T[\psi]/\sigma + c_0) \right].
\end{equation}
Moreover, $\bar{v}^{(S)}_h \in X_S$ implies
\begin{equation}
\| \bar{v}^{(S)}_h \|_{H^1(0,1)} \le \| \bar{v}^{(S)}_h \|_{W^{1,\infty}(0,1)} \le S.
\end{equation}
Hence, we obtain \eqref{eq:boundedness}, and we can complete the proof.
\end{proof}

\begin{rem}
From Lemma~\ref{lem:disc-Rellich}, we can obtain only $\bar{v}^{(S)} \in \overline{X_S}$, and there still remains a possibility that $\bar{v}^{(S)} \not\in X^2$.
However, the fundamental theorem of $\Gamma$-convergence (Lemma~\ref{lem:fundamental}) guarantees that $\bar{v}^{(S)} \in X^2$.
\end{rem}

Theorem \ref{thm:compactness-S} says that the problem \eqref{eq:problem0} has a global solution provided that the discrete minimizer of the problem \eqref{eq:disc-problem-S} is Lipschitz continuous uniformly with respect to $S$.
We provide a sufficient condition for this property.
For example, the condition \eqref{eq:sufficient} holds when $C$ is sufficiently large.

\begin{thm}[Sufficient condition for global optimization]
\label{thm:sufficient}
Let $\bar{v}^{(S)}_h = \bar{v}^{(S)}_{h,\delta,\rho} \in V_h$ be a minimizer of the problem \eqref{eq:disc-problem-S}.
Assume that there exists $c_0 > 0$ such that $\delta \le c_0 h$. Moreover, suppose that the physical parameters satisfy
\begin{equation}
\frac{1}{\sqrt{2C\sigma}} \left[ \sigma + 4\gamma \left( \frac{T[\psi]}{\sigma} + c_0 \right) \right]
+ \arctan ( |\psi|_{W^{1,\infty}(0,1)} + 2c_0)
\le \bar{\phi}
\label{eq:sufficient}
\end{equation}
for some $\bar{\phi} \in (0, \pi/2)$, which is independent of $h$, $\delta$, $\rho$, and $S$.
Then, the sequence $\{\bar{v}^{(S)}_h\}_h$ satisfies
\begin{equation}
| \bar{v}^{(S)}_h |_{W^{1,\infty}(0,1)} \le \tan\bar{\phi}
\label{eq:sufficient-0}
\end{equation}
for all $S>0$.

Therefore, the problem \eqref{eq:cont-problem} has at least one solution $\bar{v}$, and the sequence $\{ \bar{v}^{(S)}_h \}_h$ has a subsequence that converges to $\bar{v}$ in $H^1_\pi$.
\end{thm}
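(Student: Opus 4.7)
The plan is to control the slopes of $\bar{v}^{(S)}_h$ by tracking the angles $\phi_j := \arctan d_j(\bar{v}^{(S)}_h)$. Since $|\phi_{j+1}-\phi_j| = \theta_j(\bar{v}^{(S)}_h)$, the bound $|\phi_j|\le\bar{\phi}$ will hold for every $j$ as soon as we exhibit one reference index $j_0$ with $|\phi_{j_0}|\le\arctan(|\psi|_{W^{1,\infty}}+2c_0)$ and a total-turn bound $\sum_k|\theta_k| \le [\sigma+4\gamma(T[\psi]/\sigma+c_0)]/\sqrt{2C\sigma}$; the two summands in \eqref{eq:sufficient} are precisely these quantities, and \eqref{eq:sufficient-0} then follows by applying $\tan$ (monotone on $(0,\pi/2)$). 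With \eqref{eq:sufficient-0} in hand, I would fix $S\ge\tan\bar{\phi}$ and invoke Theorem~\ref{thm:compactness-S}: a subsequence of $\bar{v}^{(S)}_h$ converges in $H^1_\pi$ to a minimizer $\bar{v}$ of $E$ on $X_S$, the Lipschitz bound passes to the limit so $|\bar{v}|_{W^{1,\infty}}\le\tan\bar{\phi}$ independently of $S$, and since every $u\in H^1_\pi$ with $E[u]<\infty$ sits inside some $X_{S'}$ via $H^2_\pi\hookrightarrow W^{1,\infty}$, monotonicity of $\inf_{X_S}E$ in $S$ upgrades $\bar{v}$ to a global minimizer of $E$ on $H^1_\pi$.

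The total-turn estimate comes from testing $E_{h,\delta,\rho}[\bar{v}^{(S)}_h]\le E_{h,\delta,\rho}[c_h]=\sigma$ against a sufficiently large constant $c_h$, which together with the adhesion bound \eqref{eq:sufficient-2} and $P_{h,\rho}\ge 0$ yields $B_h[\bar{v}^{(S)}_h]+T[\bar{v}^{(S)}_h]\le\sigma+4\gamma(T[\psi]/\sigma+c_0)$. Combining \eqref{eq:bending-0} with Cauchy--Schwarz and the identity $\sum_j(l_j+l_{j+1}) = T[\bar{v}^{(S)}_h]/\sigma$ then gives
\[
\sum_j|\theta_j| \le \sqrt{\frac{2\,B_h[\bar{v}^{(S)}_h]\,T[\bar{v}^{(S)}_h]}{C\sigma}} \le \frac{B_h[\bar{v}^{(S)}_h]+T[\bar{v}^{(S)}_h]}{\sqrt{2C\sigma}} \le \frac{\sigma+4\gamma(T[\psi]/\sigma+c_0)}{\sqrt{2C\sigma}}
\]
by AM--GM, which is exactly the first term of \eqref{eq:sufficient}.

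For the reference index, the plan is to prove by perturbation that $J_{h,\delta}(\bar{v}^{(S)}_h) := \{j : \zeta_{\delta,j-1}\zeta_{\delta,j}>0\}$ is non-empty at any minimizer. After the shift of Lemma~\ref{lem:boundedness} making $\bar{v}^{(S)}_h\ge\psi$, if $J_{h,\delta}(\bar{v}^{(S)}_h)$ were empty then $A_{h,\delta}[\bar{v}^{(S)}_h]=0$, and subtracting a suitable positive constant so as to bring at least one full segment strictly within $\delta$ of $\psi$ would produce an admissible competitor with unchanged $B_h$, $T$ and $P_{h,\rho}$ but strictly positive adhesion, lowering $E_{h,\delta,\rho}$ and contradicting minimality. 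Picking any $j_0\in J_{h,\delta}(\bar{v}^{(S)}_h)$, the estimate \eqref{eq:sufficient-1} divided by $h$ and combined with $\delta\le c_0 h$ yields $|d_{j_0}(\bar{v}^{(S)}_h)|\le|\psi|_{W^{1,\infty}}+2c_0$, hence $|\phi_{j_0}|\le\arctan(|\psi|_{W^{1,\infty}}+2c_0)$, as required.

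The step I expect to require most care is the perturbation just sketched: one must choose the downward shift large enough to bring two consecutive vertices within $\delta$ of $\psi$, not merely one, while preserving the admissibility constraint $\bar{v}^{(S)}_h-c\ge\psi$, and then verify that the resulting adhesion gain strictly dominates any ancillary effect on the other energy terms; the assumption $\delta\le c_0 h$ enters essentially in quantifying how far this shift has to travel. A softer fallback, taking $j_0$ at a discrete local minimum of $\bar{v}^{(S)}_h$ where $d_{j_0}$ and $d_{j_0+1}$ have opposite signs and hence $|\phi_{j_0}|\le\theta_h/2\le\frac{1}{2}\sum_k|\theta_k|$, gives only the weaker conclusion $|\phi_j|\le\frac{3}{2}\sum_k|\theta_k|$ and so covers only the regime of \eqref{eq:sufficient} in which the $\arctan$-term is dominant.
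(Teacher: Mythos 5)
Your core argument coincides with the paper's: test the minimizer against a constant competitor to get $B_h + T \le \sigma + A_{h,\delta} \le \sigma + 4\gamma(T[\psi]/\sigma + c_0)$ via \eqref{eq:sufficient-2}, convert $B_h + T$ into a bound on the turning angles using \eqref{eq:bending-0} and AM--GM (the paper applies AM--GM termwise and obtains exactly your $\sqrt{2C\sigma}\,\sum_j|\theta_j| \le B_h + T$), anchor the angle at an index of $J_{h,\delta}$ through \eqref{eq:sufficient-1}, and then pass to the limit through Theorem \ref{thm:compactness-S}; your monotonicity-in-$S$ argument at the end is a reasonable expansion of the paper's one-line conclusion, though you should also invoke the $L^\infty$ bound of Lemma \ref{lem:boundedness} (not only the slope bound) to place all $\bar{v}^{(S)}_h$ in a single $X_{S_0}$. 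The one step you rightly flag as delicate --- non-emptiness of $J_{h,\delta}(\bar{v}^{(S)}_h)$ --- is not addressed in the paper either (it silently takes $j_1 \in J_{h,\delta}$), and your constant-shift perturbation does not close it: to achieve $\zeta_{\delta,j-1}\zeta_{\delta,j}>0$ after a downward shift you need $|(\bar{v}_j-\psi_j)-(\bar{v}_{j-1}-\psi_{j-1})|<2\delta$ for some $j$, which can fail when $\delta \ll h$ (only $\delta \le c_0 h$ is assumed), and the shift must moreover stay compatible with keeping $P_{h,\rho}=0$. Fortunately the empty case is dispatched directly: if $J_{h,\delta}=\emptyset$ then $A_{h,\delta}[\bar{v}^{(S)}_h]=0$, so $T[\bar{v}^{(S)}_h] \le E_{h,\delta,\rho}[\bar{v}^{(S)}_h] \le \sigma$; since the graph length of any non-constant $v_h$ strictly exceeds $1$, this forces $\bar{v}^{(S)}_h$ to be constant, and \eqref{eq:sufficient-0} holds trivially with slope $0$. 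With that observation substituted for the perturbation argument, your proof is complete and is essentially the paper's.
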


\begin{proof}
Recalling the estimate \eqref{eq:bending-0}, we have
\begin{align}
B_h[v_h] + T[v_h]
& = \sum_{j=1}^{N} \left[ \frac{C}{2} \frac{\theta_j^2}{l_j + l_{j+1}} + \sigma (l_j + l_{j+1}) \right] \\
& \ge \sum_{j=1}^{N} 2 \sqrt{\frac{C \sigma}{2} \theta_j^2} \\
& \ge \sqrt{2C\sigma} \left| \sum_{j=j_0}^{j_1-1} \theta_j \right|  \\
& =  \sqrt{2C\sigma} | \phi_{j_1} - \phi_{j_0} |
\label{eq:sufficient-3}
\end{align}
for each $v_h \in V_h$, where $j_0$ and $j_1$ are arbitrary indices and $\phi_j = \psi(v_h) = \arctan d_{h,j}(v_h)$.
If $j_1 \in J_{h,\delta} = \{ j=1,\dots,N \mid \zeta_{\delta,j-1} \zeta_{\delta,j} > 0 \}$ (cf.~Lemma \ref{lem:boundedness}), then
\begin{equation}
|d_{h,j_1}(v_h)| \le |\psi|_{W^{1,\infty}(0,1)} + 2c_0,
\end{equation}
owing to \eqref{eq:sufficient-1} and $\delta \le c_0 h$, which implies
\begin{equation}
|\phi_{j_1}(v_h)| \le \arctan ( |\psi|_{W^{1,\infty}(0,1)} + 2c_0)
\label{eq:sufficient-4}
\end{equation}
for $v_h \in V_h$.
Therefore, owing to the equations \eqref{eq:sufficient-2}, \eqref{eq:boundedness-1}, \eqref{eq:sufficient-3}, and \eqref{eq:sufficient-4},
\begin{equation}
|\phi_{j_0}(\bar{v}^{(S)}_h)| \le
\frac{1}{\sqrt{2C\sigma}} \left[ \sigma + 4\gamma \left( \frac{T[\psi]}{\sigma} + c_0 \right) \right]
+ \arctan ( |\psi|_{W^{1,\infty}(0,1)} + 2c_0).
\end{equation}
for arbitrary index $j_0$.
Hence, the assumption \eqref{eq:sufficient} implies the desired estimate \eqref{eq:sufficient-0}.
Combining \eqref{eq:sufficient-0} with Lemma \ref{lem:boundedness}, we can obtain $W^{1,\infty}$-boundedness of the discrete minimizers, which yields the global optimization \eqref{eq:problem0}.
\end{proof}

\section{Numerical examples}
\label{section:num_exp}
In this section, some numerical examples are presented.
Since the problems \eqref{eq:disc-problem} and \eqref{eq:disc-problem-S} are optimization problems in finite-dimensional spaces, we can apply various numerical algorithms to solve them.
We solve the problem \eqref{eq:disc-problem} only for simplicity.
The algorithm we choose is the quasi-Newton method with the Broyden-Fletcher-Goldfarb-Shannon (BFGS) formula \cite{NocW06}.
We stop the quasi-Newton iteration if the functional $E_{h,\delta,\rho}$ satisfies
\begin{equation}
\left\| \frac{\nabla E_{h,\delta,\rho}}{E_{h,\delta,\rho}} \right\|_\infty \le 10^{-5}
\end{equation}
as a function defined on $\bR^N$.
Note that the quasi-Newton method computes not only global minimizers but also local minimizers.
Thus, the following numerical results are merely local minimizers.
We regard the one whose energy is less than any of the others as the global minimizer.

\subsection{Sinusoidal obstacle}
\label{subsec:sinusoidal}
We choose $\psi_1(x) = 0.03 \sin(24 \pi x)$ as an obstacle.
We consider the two pairs of physical parameters as shown in Table~\ref{tab:parameters-sinusoidal}.
Moreover, we set $h=\delta=1/N$ and $\rho=h/100$ for $N=100, 200, 400$ as discretization parameters.
Then, we obtain six typical examples of local minimizers, as plotted in Figure~\ref{fig:sinusoidal}.
In addition to these local minimizers, some combinations can be local minimizers, as well.
Note that the combination must not be a global minimizer in most cases.

\begin{table}[htb]\centering
\begin{tabular}{c|ccc}
 & $C/2$ & $\sigma$ & $\gamma$ \\\hline
Parameter 1 & 0.0005 & 0.01 & 1 \\\hline
Parameter 2 & 0.0003 & 0.01 & 2 \\\hline
\end{tabular}
\caption{Physical parameters for $\psi_1$.}
\label{tab:parameters-sinusoidal}
\end{table}

\begin{figure}[htb]\centering
\begin{tabular}{ccc}
\includegraphics[page=6]{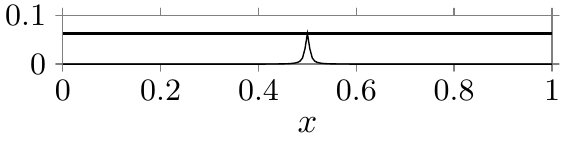}
&~&
\includegraphics[page=7]{plotresults.pdf} \\[-1ex]
Type A &~& Type B \\[2ex]
\includegraphics[page=8]{plotresults.pdf}
&~&
\includegraphics[page=9]{plotresults.pdf} \\[-1ex]
Type C &~& Type D \\[2ex]
\includegraphics[page=10]{plotresults.pdf}
&~&
\includegraphics[page=11]{plotresults.pdf} \\[-1ex]
Type E &~& Type F
\end{tabular}
\caption{Typical examples of local minimizers for the obstacle $\psi_1$. In each figure, the thin line expresses the obstacle and the thick line expresses a local minimizer. We plot the solutions for $N=400$ and Parameter~1.}
\label{fig:sinusoidal}
\end{figure}

We show the energy for each parameter and each local minimizer in Table~\ref{tab:sinusoidal-energy}.
For Parameter~2, the global minimizer is Type F, which adheres to the obstacle everywhere.
This is because the adhesion coefficient $\gamma$ is much larger than other parameters.
For Parameter~1, the global minimizer is not the trivial case (Type A nor Type F) but is a non-trivial one, Type B.
Although we could not find parameters for which Type C or Type D is the global minimizer, those examples would be quite exciting if they exist.

\begin{table}[htb]\centering\footnotesize$
\begin{array}{|c|c|c|c|c|c|c|}\hline
 & \text{Type A} & \text{Type B} & \text{Type C} & \text{Type D} & \text{Type E} & \text{Type F} \\\hline
  & -0.0900541 & \bm{-0.2465327} & -0.2007124 & -0.1404174  & -0.1036022 & 0.2670607 \\
\text{Parameter 1} & -0.0794536 & \bm{-0.2064951} & -0.1570993 & -0.1187022 & -0.0808694 & 0.4065035 \\
  & -0.0627887 & \bm{-0.1648750} & -0.1270884 &
  -0.0986503 & -0.0655316 & 0.5285373 \\\hline
  & -0.2105891 & -1.3705604 & -0.9284880 & -0.6811337 & -0.4637541 & \bm{-2.2685733} \\
\text{Parameter 2} & -0.1785572 & -1.3498736 & -0.8945974 & -0.6749780 & -0.4460241 & \bm{-2.1895721} \\
  & -0.1388582 & -1.3196272 & -0.8848577 & -0.6623300 & -0.4400052 & \bm{-2.1441625} \\\hline
\end{array}$
\caption{Energy for each parameter and local minimizer.
In each cell, the first row is the value when $N=100$, the second is for $N=200$, and the third is for $N=400$.
The bold letters describe the global minimizers.}
\label{tab:sinusoidal-energy}
\end{table}

\subsection{Almost singular obstacle}
\label{subsec:singular}
We next choose
\begin{equation}
\psi_2(x) = \frac{\varepsilon^2 x^2 (1-x)^2}{\varepsilon^2 + (2x-1)^2},
\quad \varepsilon = 0.01
\end{equation}
as an obstacle, which is smooth but has a sharp peak at $x=1/2$.
We consider three pairs of parameters as given in Table~\ref{tab:parameters-singular}.
Moreover, we set $h=\delta=1/N$ and $\rho=h/1000$ for $N=100, 200, 400$ as discretization parameters.
Then, we obtain four types of local minimizers, as plotted in Figure~\ref{fig:singular}.

\begin{table}[htb]\centering
\begin{tabular}{c|ccc}
 & $C/2$ & $\sigma$ & $\gamma$ \\\hline
Parameter 1 & 0.1 & 1 & 1 \\\hline
Parameter 2 & 0.1 & 1 & 0.01 \\\hline
Parameter 3 & 0.001 & 1 & 5 \\\hline
\end{tabular}
\caption{Physical parameters for $\psi_2$.}
\label{tab:parameters-singular}
\end{table}

\begin{figure}[htb]
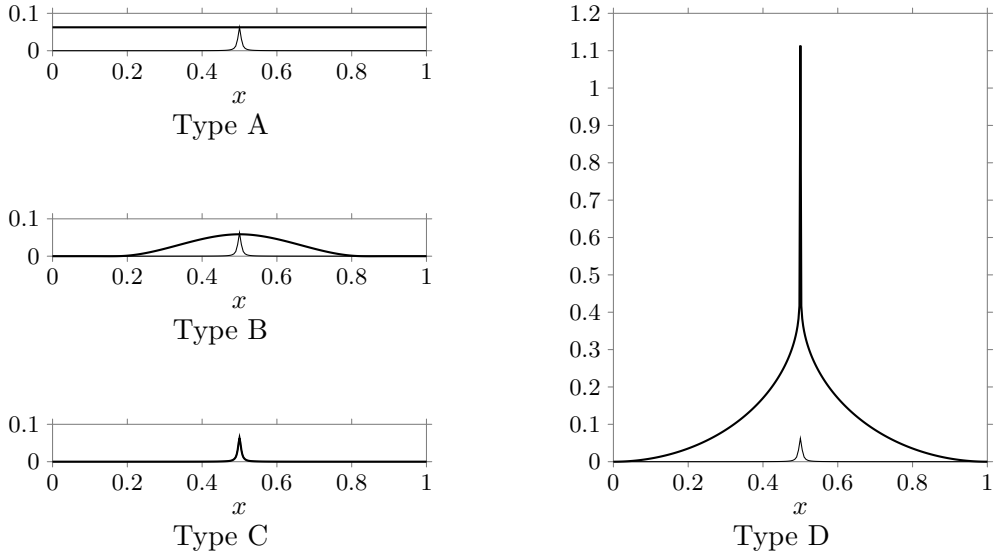

\centering
\begin{tikzpicture}[x=1ex,y=1ex,every node/.style={inner sep=0ex, outer sep=0ex}]
\coordinate (O) at (0,0);
\coordinate (X) at (45,0);
\coordinate (Y) at (0,16.5);
\coordinate (C) at ($(O)!0.5!(X)$);
\coordinate (B) at ($(C) + (Y)$);
\coordinate (A) at ($(B) + (Y)$);
\coordinate (D) at ($(O)!1.5!(X)$);
\node[above] at (A) {\includegraphics[page=1]{plotresults.pdf}};
\node[above] at (B) {\includegraphics[page=2]{plotresults.pdf}};
\node[above] at (C) {\includegraphics[page=3]{plotresults.pdf}};
\node[above] at (D) {\includegraphics[page=4]{plotresults.pdf}};
\node[below] at (A) {Type A};
\node[below] at (B) {Type B};
\node[below] at (C) {Type C};
\node[below] at (D) {Type D};
\end{tikzpicture}
\caption{Local minimizers for $\psi_2$.
In each figure, the thin line expresses the obstacle and the thick line expresses a local minimizer. We plot the solutions for $N=400$. Types A and D are the minimizers for Parameter~2, Type B is the one for Parameter~1, and Type C is the one for Parameter~3.}
\label{fig:singular}
\end{figure}

It is quite remarkable that there exists a local minimizer, such as Type D, that appears to have singularity.
Furthermore, the $W^{1,\infty}$-seminorm of such a solution appears to be proportional to $N$ (Table~\ref{tab:singular-typeD}).
We do not know why this phenomenon occurs.
However, we imagine that there exists a curve that is a critical point of $E_{h,\delta,\rho}$ and that contains a loop, and that the ``singular'' solution corresponds to the looped curve.

\begin{rem}\label{rem:singular}
Although we could not find parameters for which Type D is a global solution, there might exist such parameters.
If they exist, then we can show that the global optimization problem \eqref{eq:cont-problem} does not have solutions in general.
\end{rem}

\begin{table}[htb]
\centering
\begin{tabular}{c|ccc}
 & $N=100$ & $N=200$ & $N=400$ \\\hline
Parameter 1 & $67.991211$ & $138.22525$ & $278.70029$ \\\hline
Parameter 2 & $68.027096$ & $138.25028$ & $278.71791$ \\\hline
\end{tabular}
\caption{Behavior of the $W^{1,\infty}$-seminorms of the local minimizers of Type D.}
\label{tab:singular-typeD}
\end{table}

We show the energy for each parameter and each local minimizer in Table~\ref{tab:singular-energy}.
We observe the local minimizer of Type B.
If $\gamma$ is large, then the solution tends to adhere to the obstacle.
Thus, the ``skirt'' of the solution becomes narrower when the value of $\gamma$ becomes larger (Figure~\ref{fig:singular-typeB}).
In addition, when $\gamma$ is small, the skirt is wide.
Thus, there exists no local minimizer of Type B if $\gamma$ is small to some degree (Table~\ref{tab:singular-energy}, Parameter~2).

\begin{table}[htb]
\centering
$\begin{array}{|c|c|c|c|c|}\hline
& \text{Type A} & \text{Type B} & \text{Type C} & \text{Type D} \\\hline
                & 1.0 & \bm{0.7915842} & & 4.6643686 \\
\text{Parameter 1} & 1.0 & \bm{0.8262222} & \times & 4.7157447 \\
                & 1.0 & \bm{0.8501729} & & 4.7457190 \\\hline
                & \bm{1.0} &        & & 4.9166234 \\
\text{Parameter 2} & \bm{1.0} & \times & \times & 4.9479670 \\
                & \bm{1.0} &        & & 4.9612867 \\\hline
                & 1.0 & -3.2773058 & \bm{-4.0788871} &        \\
\text{Parameter 3} & 1.0 & -3.2958225 & \bm{-3.9828137} & \times \\
                & 1.0 & -3.2943217 & \bm{-3.6601521} &        \\\hline
\end{array}$
\caption{Energy for each parameter and local minimizer.
In each cell, the first row is the value when $N=100$, the second for $N=200$, and the third for $N=400$.
The bold letters describe the global minimizers.
The symbol $\times$ expresses non-existence of the local minimizers.}
\label{tab:singular-energy}
\end{table}

\begin{figure}[htb]
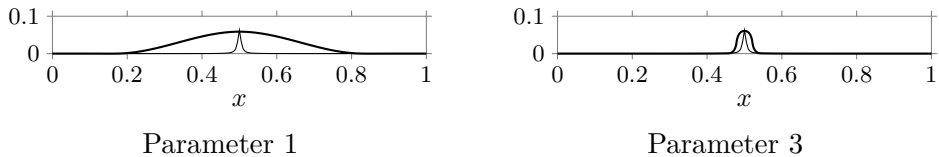

\centering
\begin{tabular}{ccc}
\includegraphics[page=2]{plotresults.pdf}
& ~ &
\includegraphics[page=5]{plotresults.pdf} \\
Parameter 1
& &
Parameter 3
\end{tabular}
\caption{Solutions of Type B for Parameters~1 and 3.}
\label{fig:singular-typeB}
\end{figure}

\section{Concluding remarks}
\label{section:conclusion}
One of the aims of this study is to prove that there exists at least one global minimizer of the functional $E$.
However, we obtained only the existence result under the ``bounded slope condition,'' as shown in Theorem \ref{thm:compactness-S}.
The existence of global solutions was shown only in the special case (Theorem \ref{thm:sufficient}).
Moreover, as results of numerical experiments, we were able to find a ``singular'' local minimizer such as Type D in Figure~\ref{fig:singular}.
If there exists a pair of parameters such that Type D is a global minimizer, then we can show that the problem \eqref{eq:cont-problem} does not have solutions in general (Remark \ref{rem:singular}).
Therefore, it is important and challenging to find such parameters or to show that \eqref{eq:cont-problem} always has at least one solution.
We leave these problems for future works.

\section*{Acknowledgements}
I would like to thank Mr.~Tatsuya Miura for bringing this topic to my attention and encouraging me through valuable discussions.
In particular, he suggested me that there may be singular examples as presented in Subsection~\ref{subsec:singular}.
This work was supported by the Program for Leading Graduate Schools, MEXT,
Japan, and by JSPS KAKENHI (15J07471).

\bibliographystyle{plain}
\bibliography{obstacle_adhesion}
\end{document}